\documentclass{article}
\usepackage{geometry}
\usepackage{amsmath,amssymb,amsthm,amsfonts,color,enumerate}
\usepackage{mathrsfs}
\usepackage{bookmark}
\usepackage{bbm}
\usepackage[capitalise]{cleveref}
\usepackage{varwidth}
\usepackage[backend=biber, 
  url=false,                  
  isbn=false,   
  doi=false,    
  maxnames=99
]{biblatex}
\usepackage{url}
\usepackage{ulem}
\usepackage{appendix}
\usepackage{tikz-cd}
\usepackage{quiver}
\usepackage{footmisc}

\newtheorem{Definition}{Definition}[section]

\newtheorem{Theorem}{Theorem}[section]
\newtheorem{Lemma}{Lemma}[section]
\newtheorem{Main Theorem}{Main Theorem}
\newtheorem{Remark}{Remark}[section]

\newtheorem{Corollary}{Corollary}[section]
\newtheorem{Claim}{Claim}[Theorem]

\newtheorem{Question}{Question}
\newtheorem{Test question}{Test question}

\newtheorem*{Open problem}{Open problem}
\newtheorem{Key Lemma}{Key Lemma}

\title{Identity crisis on tightly $C^{(2)}$-compact cardinals}

\author{Fengju Zhang}

\begin{document}
\maketitle

\begin{abstract}
    Bagaria and Goldberg [Question 4.7, \cite{bagairagoldbergreflectingmeasures}] asked if $\kappa$ being tightly $C^{(n)}$-compact implies $\kappa\in C^{(n+1)}$ for natural numbers $n\ge 1$?
    Usuba gave a negative answer for $n=1$.
    In this paper we give a negative answer for $n=2$.
\end{abstract}

\section{Introduction}
Large cardinals are a central area of research in set theory, and many questions about their properties cannot be decided in $\mathsf{ZFC}$. 
A classical example is how large is the least strongly compact cardinal. 
In 1976, Magidor \cite{magidor1976large} proved that the least strongly compact cardinal could be the least supercompact cardinal while it could be the least measurable cardinal. 
Thus $\mathsf{ZFC}$ alone does not determine how large the least strongly compact cardinal is in comparison with measurable cardinals and with supercompact cardinals. 
This is the identity crisis phenomenon studied by Magidor.

This situation changes under additional axioms. Recently Goldberg \cite{Goldbergultrapoweraxiom} introduced a set-theoretic principle called the Ultrapower Axiom ($\mathsf{UA}$), 
and proved that under $\mathsf{UA}$ the class of strongly compact cardinals coincides with the class of supercompact cardinals, except at the measurable limits of supercompact cardinals. 
By Menas's theorem \cite{menas1974strong}, this result is optimal. Consequently, under $\mathsf{UA}$ the least strongly compact cardinal must be the least supercompact cardinal.

A parallel development has taken place for \(C^{(n)}\)-extendible cardinals. Recall that
\[
C^{(n)}=\{\alpha:V_{\alpha}\prec_{\Sigma_{n}}V\}.
\]
In the study of Vopěnka's Principle, Bagaria et. al.  \cite{BagariaMathiasDefinable} introduced \(C^{(n)}\)-extendible cardinals. 
Bagaria and Goldberg \cite{bagairagoldbergreflectingmeasures} later showed that \(C^{(n)}\)-extendibility admits a characterization in terms of the existence of certain ultrafilters which is closely analogous to supercompactness, so it is naturally viewed as a strengthening of supercompactness. They also introduced tightly \(C^{(n)}\)-compact cardinals, a corresponding generalization of strong compactness, and proved the following result:

\begin{Theorem}[Bagaria, Goldberg \cite{bagairagoldbergreflectingmeasures}]
Under $\mathsf{UA}$, 
the class of tightly \(C^{(n+1)}\)-compact cardinals coincides with the class of \(C^{(n)}\)-extendible cardinals, except at the measurable limits of $C^{(n)}$-extendible cardinals.
\end{Theorem}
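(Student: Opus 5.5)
The plan is to follow the template of Goldberg's proof that, under $\mathsf{UA}$, strongly compact cardinals are supercompact except at measurable limits of supercompacts, transferring it to the $C^{(n)}$ setting. I will use the Bagaria--Goldberg characterization of $C^{(n)}$-extendibility: $\kappa$ is $C^{(n)}$-extendible iff for a proper class of $\lambda\in C^{(n)}$ there is a $\kappa$-complete normal fine ultrafilter $\mathcal U$ on $P_\kappa\lambda$ with $j_{\mathcal U}(\kappa)>\lambda$ and $j_{\mathcal U}(\kappa)\in C^{(n)}$ in the ultrapower sense (for $C^{(n)}$ computed in $V$, restricted to $j_{\mathcal U}(\kappa)$). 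Tight $C^{(n+1)}$-compactness is the analogous statement with $\kappa$-complete fine ultrafilters (equivalently, ultrapower embeddings that are $\lambda$-tight rather than $\lambda$-supercompact) and the $C^{(n+1)}$ correctness condition on $j(\kappa)$.

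\textbf{The easy direction.} Given $\kappa$ $C^{(n)}$-extendible, a normal measure from the characterization is in particular fine and $\kappa$-complete, and the ultrapower is tight. The extra quantifier needed to upgrade $C^{(n)}$ to $C^{(n+1)}$ in the target should come from the fact that $C^{(n)}$-extendibility is $\Pi_{n+1}$-expressible and reflects, so one can choose $\lambda$ and the measure so that $j(\kappa)\in C^{(n+1)}$ (this direction does not need $\mathsf{UA}$). Measurable limits of $C^{(n)}$-extendibles are tightly $C^{(n+1)}$-compact by a Menas-style argument: the limit of the witnessing ultrafilters, integrated along a normal measure on $\kappa$ concentrating on $C^{(n)}$-extendibles below it, is fine and has the required correctness.

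\textbf{The hard direction.} Suppose $\kappa$ is tightly $C^{(n+1)}$-compact and not a measurable limit of $C^{(n)}$-extendibles. Under $\mathsf{UA}$, Goldberg's analysis shows that for each regular $\lambda\ge\kappa$ the Ketonen-least $\kappa$-complete uniform ultrafilter $\mathcal U_\lambda$ on $\lambda$ has an ultrapower that is $\lambda$-supercompact once $\kappa$ is $\lambda$-strongly compact and not a measurable limit of $\lambda$-supercompacts below it. So $\kappa$ is supercompact. I then need to show that the supercompactness embeddings witnessing this can be taken with $j(\kappa)$ correctly $C^{(n)}$. The key step is that $\mathcal U_\lambda$ is $\Sigma_{n+1}$-definable from $\lambda$ (it is the Ketonen-least such ultrafilter, a definable wellorder choice). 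Take $\lambda\in C^{(n+1)}$ and a tight embedding $i$ with $i(\kappa)\in C^{(n+1)}$. Since $\mathcal U_\lambda$ is definable, the tight embedding factors through $j_{\mathcal U_\lambda}$ via the Ketonen/UA comparison. Then $j_{\mathcal U_\lambda}(\kappa)\le i(\kappa)$, and one transfers correctness using elementarity plus the definability. Concretely, the $\Sigma_{n+1}$ facts about $V_{j(\kappa)}$ reflect into $V_{i(\kappa)}$, giving $j_{\mathcal U_\lambda}(\kappa)\in C^{(n)}$. This yields the $C^{(n)}$-extendibility characterization.

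\textbf{Main obstacle.} The main difficulty is this correctness transfer: showing $j_{\mathcal U_\lambda}(\kappa)\in C^{(n)}$ from the tight embedding's $i(\kappa)\in C^{(n+1)}$. This is exactly why the index drops by one. My first attempt is to argue that the statement ``$\kappa$ is ${<}\lambda$-supercompact via the canonical ultrafilters'' is $\Pi_{n+1}$ in parameters. I would then apply $i$ and use $i(\kappa)\in C^{(n+1)}$ to see that in $M_i$, hence in $V$, $i(\kappa)$ is a limit of points where $\kappa$'s canonical measures have correct images. Finally, I would use the factor map $k\colon M_{\mathcal U_\lambda}\to M_i$ with $k\circ j_{\mathcal U_\lambda}=i$ to pull correctness back to $j_{\mathcal U_\lambda}(\kappa)$.
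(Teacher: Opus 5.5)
The paper only quotes this theorem from Bagaria--Goldberg and gives no proof of it. Your proposal does not prove it either. The main problem is that both characterizations you use put the correctness requirement on the wrong ordinal.

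In the paper's definition, a tight $\lambda$-$C^{(n+1)}$-compactness witness is an embedding $j:V\to M$ with $t_j(\lambda)\in (C^{(n+1)})^M\cap j(\kappa)$. This is a condition on the tightness cardinal $t_j(\lambda)$ inside $M$, not on $j(\kappa)$. The reflecting-measure characterization of $C^{(n)}$-extendibility (the case $n=1$ is the Bagaria--Goldberg theorem quoted in Section 3) asks, for every $\lambda\in C^{(n+1)}$ above $\kappa$, for a normal fine $\mathcal U$ on $\mathcal P_\kappa(\lambda)$ with $T^{n+1}_{\kappa,\lambda}\in\mathcal U$, i.e.\ $\lambda\in (C^{(n+1)})^{M_{\mathcal U}}$.

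Your condition ``$j_{\mathcal U}(\kappa)\in C^{(n)}$'' is not equivalent to this under either reading.
\begin{itemize}
\item Read in $V$, it never holds. $M_{\mathcal U}$ contains every subset of $\mathcal P_\kappa(\lambda)$ and $j_{\mathcal U}(\kappa)$ is inaccessible there, so $j_{\mathcal U}(\kappa)>2^{\lambda^{<\kappa}}\ge |j_{\mathcal U}(\kappa)|$. Hence $j_{\mathcal U}(\kappa)$ is not even a cardinal.
\item Read in $M_{\mathcal U}$, it is equivalent by elementarity to $\kappa\in C^{(n)}$. For $n=1$ every supercompact cardinal satisfies this, including the least one, which is not extendible.
\end{itemize}
So even a completed version of your hard direction would not give $C^{(n)}$-extendibility.

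With the correct characterizations, the easy direction needs no reflection ``upgrade''. For normal fine $\mathcal U$ one has $t_{j_{\mathcal U}}(\lambda)=\lambda$, witnessed by $j_{\mathcal U}''\lambda\in M_{\mathcal U}$. So an $(n+1)$-reflecting normal measure on $\mathcal P_\kappa(\lambda)$ is already a tight $\lambda$-$C^{(n+1)}$-compactness witness. The shift from $n$ to $n+1$ is built into the characterization; it is not a loss in a correctness transfer, as your last paragraph suggests.

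The hard direction has two further gaps.

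First, the case division. You assume only that $\kappa$ is not a measurable limit of $C^{(n)}$-extendibles, then invoke Goldberg's theorem to conclude $\kappa$ is supercompact. But that theorem's exceptional case is measurable limits of \emph{supercompact} cardinals. Every $C^{(n)}$-extendible cardinal with $n\ge 1$ is extendible, hence a measurable limit of supercompacts. So for the least $C^{(n)}$-extendible cardinal, for instance, Goldberg's dichotomy is already satisfied by its second alternative, and your first step gives no information. The $\mathsf{UA}$ dichotomy has to be proved for the reflecting property itself. The alternative to ``$\kappa$ carries an $(n+1)$-reflecting normal measure on $\mathcal P_\kappa(\lambda)$'' should be ``$\kappa$ is a measurable limit of cardinals that do''. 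It cannot simply be inherited from the strong compactness/supercompactness dichotomy.

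Second, the correctness transfer is unsupported even on its own terms. Knowing $j_{\mathcal U_\lambda}(\kappa)\le i(\kappa)$ and $i(\kappa)\in C^{(n+1)}$ says nothing about the correctness of the smaller ordinal. Appealing to the definability of $\mathcal U_\lambda$ does not fill this in.

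What must actually be transferred is different. Suppose you use $\mathsf{UA}$ to factor a tight witness as $i=k\circ j_W$, with $W$ normal fine on $\mathcal P_\kappa(\lambda)$ and $k$ internal to $M_W$. Then $t_i(\lambda)=t_k(\lambda)$ by the closeness lemma quoted in Section 2. You must then get from $t_k(\lambda)\in(C^{(n+1)})^{M_i}$ to $\lambda\in(C^{(n+1)})^{M_W}$, or choose $W$ so that this holds. This is immediate from the elementarity of $k$ when $\mathrm{crit}(k)>\lambda$, but not in general. This is where the work lies, and your proposal does not address it.
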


Since every \(C^{(n)}\)-extendible cardinal is \(\Sigma_{n+2}\)-correct, 
it follows under $\mathsf{UA}$ that every tightly \(C^{(n+1)}\)-compact cardinal is \(\Sigma_{n+2}\)-correct. 
Bagaria and Goldberg then asked whether this amount of correctness follows outright from tight compactness alone.

\begin{Question}[Bagaria, Goldberg \cite{bagairagoldbergreflectingmeasures}]
Does \(\kappa\) being tightly \(C^{(n)}\)-compact imply \(\kappa\in C^{(n+1)}\) for natural numbers $n\ge 1$?
\end{Question}

For \(n=1\), Usuba gave a negative answer:

\begin{Theorem}[Usuba \cite{Usubaprivate}]
\label{Theorem: Usuba}
Modulo a supercompact cardinal, it is consistent that the  least tightly \(C^{(1)}\)-compact cardinal is not $\Sigma_2$-correct.
\end{Theorem}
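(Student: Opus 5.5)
The plan is a Magidor-style identity crisis argument. Start with a supercompact $\kappa$ and force so that $\kappa$ keeps enough strong compactness to be tightly $C^{(1)}$-compact, while some $\Sigma_2$ fact becomes true in $V$ but fails in $V_\kappa$. Recall that tight $C^{(1)}$-compactness of $\kappa$ asks, roughly, for every $\lambda\in C^{(1)}$ above $\kappa$, a $\kappa$-complete fine ultrafilter or an elementary embedding $j:V\to M$ with critical point $\kappa$, $j(\kappa)>\lambda$, and $j(\lambda)\in C^{(1)}$ in the appropriate sense. The embedding should also have a covering property for $j``\lambda$, as in strong compactness.

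The simplest $\Sigma_2$ statement to make fail below $\kappa$ is ``there is a measurable cardinal'' (more precisely, ``there is a cardinal $\delta$ carrying a $\delta$-complete nonprincipal ultrafilter''), which is $\Sigma_2$ since it is witnessed in $V_{\delta+2}$. So the goal is a model in which $\kappa$ is the least measurable cardinal, yet $\kappa$ is tightly $C^{(1)}$-compact. Then $V_\kappa\models$ ``no measurables'' while $V\models$ ``there is a measurable'', so $\kappa\notin C^{(2)}$.

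The steps, in order:

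\begin{enumerate}[(1)]
\item Assume GCH and let $\kappa$ be supercompact. Perform Magidor's iteration of Prikry forcing, with Easton/Magidor supports, that singularizes every measurable cardinal $\delta<\kappa$. Alternatively, use the Gitik or Apter--Cummings style ``non-reflecting stationary set'' iteration that destroys measurability below $\kappa$ while keeping $\kappa$ strongly compact.
\item Verify that in the extension $V[G]$ the cardinal $\kappa$ is the least measurable. This is standard from Magidor's analysis: the cardinals below $\kappa$ that were measurable are now singular, and no new measurables are created.
\item Show that $\kappa$ remains tightly $C^{(1)}$-compact. Fix $\lambda\in C^{(1)}$ of $V[G]$; since the forcing is small relative to $\lambda$, $C^{(1)}$ is essentially unchanged above $\kappa$ (these are the uncountable cardinals $\alpha$ with $V_\alpha=H_\alpha$). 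Take a $\lambda$-supercompact (indeed $C^{(1)}$-supercompact) embedding $j:V\to M$ with $j(\lambda)\in C^{(1)}$. Then follow Magidor: iterate further by Prikry-singularizing the normal measures on $\kappa$ and its images, obtaining $i:V\to N$. Lift this to $V[G]\to N[G^*]$ by absorbing $j(G)$, using that in $N$ the cardinal $\kappa$ is singularized by a Prikry sequence whose tail we can generically produce.
\item Check the resulting embedding's properties: critical point $\kappa$, a cover for $i``\lambda$ of size below $i(\kappa)$ (coming from $j``\lambda\in M$), and $i(\lambda)\in C^{(1)}$. The last property should follow since $i(\lambda)$ is a cardinal with $V_{i(\lambda)}=H_{i(\lambda)}$ in $N$, and iterating normal measures preserves this in the direction needed.
\end{enumerate}

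The main obstacle is step (4), specifically the clause $i(\lambda)\in C^{(1)}$ evaluated in $V[G]$ rather than in $N[G^*]$. Strong compactness comes for free from Magidor, but tightness requires correctness of the image of $\lambda$ in $V$. The first thing to try is to choose $\lambda$ a strong limit of cofinality $>\kappa$, and to note that $i$ is a composition of $j$ with ultrapower iterations by measures in $M$ of size less than $j(\lambda)$. Such iterations fix $j(\lambda)$ when $j(\lambda)$ is a strong limit of large cofinality, so $i(\lambda)=j(\lambda)\in C^{(1)}$, and this membership is absolute to the small generic extension.
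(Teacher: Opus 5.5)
Your overall architecture is right: kill all measurables below $\kappa$, use the $\Sigma_2$ sentence ``there is a measurable cardinal'', and compose a supercompactness embedding $j:V\to M$ with a further embedding $i$ of $M$ at $\kappa$ so that nothing has to be forced at stage $\kappa$ in the target. The non-reflecting stationary set iteration you list as an alternative is exactly what the paper uses, with $i:M\to N$ the ultrapower by a Mitchell-order-$0$ normal measure on $\kappa$.

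The gap is in steps (3)--(4), where you check the wrong property. Tight $\lambda$-$C^{(1)}$-compactness asks for $j:V\to M$ with $t_j(\lambda)\in (C^{(1)})^M\cap j(\kappa)$. Here $t_j(\lambda)$ is the \emph{least} $M$-cardinality of a set in $M$ covering $j''\lambda$. So the ordinal that must be $\Sigma_1$-correct is this tightness, not $j(\lambda)$. Since $j(\lambda)\ge j(\kappa)$ for $\lambda\ge\kappa$, it can never be the witnessing ordinal. Correctness is also computed in the target model, not in $V[G]$. The ``main obstacle'' you isolate is therefore not part of the definition, and it is not even satisfiable in the natural setting. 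For the ultrapower by a normal measure on $\mathcal P_\kappa\lambda$ one has $|j(\lambda)|^V\le 2^{\lambda^{<\kappa}}<j(\kappa)$, so $j(\lambda)$ is not a cardinal of $V$. Finally, the definition quantifies over every $\lambda\in C^{(1)}$, and you give no reason why restricting to $\mathrm{cf}(\lambda)>\kappa$ suffices.

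What is missing is the actual tightness computation, and this is exactly where composing with $i$ is dangerous. For $k=i\circ j$ one has $t_j(\lambda)=\lambda\in(C^{(1)})^M$. Since $i$ is close to $M$, $t_k(\lambda)=t_i(t_j(\lambda))=t_i(\lambda)$. One then shows $t_i(\lambda)\in(C^{(1)})^N$ by cases:
\begin{enumerate}
\item If $\mathrm{cf}(\lambda)<\kappa$, then $t_i(\lambda)=i(\lambda)$, which is in $(C^{(1)})^N$ by elementarity.
\item If $\mathrm{cf}(\lambda)>\kappa$, then GCH gives $i(\lambda)=\lambda$. This is your ``the iteration fixes it'' intuition, but applied to $\lambda$ rather than $j(\lambda)$.
\item If $\mathrm{cf}(\lambda)=\kappa$, then $i$ is discontinuous at $\lambda$. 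Here $\lambda$ is a limit of $C^{(1)}$, and $t_i(\lambda)=\sup_{\delta<\lambda}t_i(\delta)=\sup_{\delta<\lambda}i(\delta)$, computed over $\delta\in C^{(1)}$ of cofinality $\omega$. This is a limit of $(C^{(1)})^N$.
\end{enumerate}

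After lifting $k$ to $k^*$ you must also show $t_{k^*}(\lambda)=t_k(\lambda)$, i.e.\ the generic extension of $N$ adds no smaller cover of $k''\lambda$. The part of $k(P_\kappa)$ acting above $i(\lambda)$ is strategically closed enough to add no such cover. The part below has size $i(\kappa)<\lambda$, so any cover in the extension lies inside a cover in $N$ of the same size. You must also check that $t_k(\lambda)$ remains $\Sigma_1$-correct in the extended target.

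None of this appears in your plan. If you keep the Prikry route with an $\omega$-length iteration as $i$, these computations have to be redone. The lemmas giving $t_i(\lambda)=i(\lambda)$ for small cofinality and continuity of $t_i$ at cofinality $\le\kappa$ are stated only for ultrapowers by a single $\kappa$-complete ultrafilter.
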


Usuba's original proof is similar to that of Theorem 3.5 in \cite{magidor1976large}.
More precisely, starting from a model of set theory containing a supercompact cardinal \(\kappa\)
and no measurable cardinals above \(\kappa\),
Usuba used a Magidor iteration $\mathbb P$ of Prikry forcing to destroy all measurable cardinals below \(\kappa\).
Consequently, in \(V^{\mathbb P}\), \(\kappa\) is the unique measurable cardinal.
He then showed that the tight \(C^{(1)}\)-compactness of \(\kappa\) is preserved by \(\mathbb P\).
Since the sentence \(\phi\equiv\) ``there exists a measurable cardinal'' is a \(\Sigma_2\)-sentence, 
and since \((V^{\mathbb P})_\kappa \not\models \phi\),
it follows that \(\kappa\) is not \(\Sigma_2\)-correct in \(V^{\mathbb P}\).

The main result of this paper is the corresponding negative answer for \(n=2\):

\begin{Theorem}
\label{Theorem: tightly C(2) may not}
It is consistent, modulo an extendible cardinal and a supercompact cardinal above it, that the least tightly \(C^{(2)}\)-compact cardinal is not $\Sigma
_3$-correct.
\end{Theorem}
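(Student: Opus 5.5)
We follow Usuba's strategy for $n=1$, shifted one level up the Lévy hierarchy. Let $\lambda$ be extendible and $\kappa>\lambda$ supercompact. First we arrange the ground model. Cutting the universe off at a suitable rank, or applying a preparatory forcing, we may assume there are no extendible cardinals above $\lambda$ except what we need. We also assume that $\kappa$ is the least supercompact above $\lambda$ and that $\kappa$ remains tightly $C^{(2)}$-compact. The point is that tight $C^{(2)}$-compactness is witnessed by $\kappa$-complete fine ultrafilters $U$ on $P_\kappa(X)$ whose ultrapowers $j_U$ satisfy $j_U(\kappa)\in C^{(2)}$ (or the appropriate tightness condition with the target model sufficiently correct). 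A supercompact $\kappa$ with enough $C^{(2)}$-cardinals above it, such as those provided by the extendible $\lambda$ below it together with the structure above $\kappa$, should give such measures. We then force with a Magidor-style iteration $\mathbb P$ of length $\kappa$, in the style of Theorem 3.5 of \cite{magidor1976large} and of Usuba's proof of \cref{Theorem: Usuba}. This iteration destroys the relevant large cardinal property at every cardinal $\alpha<\kappa$ that would be a candidate for tight $C^{(2)}$-compactness, for instance by Prikry-type changes of cofinality at the measurables witnessing compactness below $\kappa$.

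The target $\Sigma_3$ sentence is $\phi\equiv$ ``there exists an extendible cardinal''. Extendibility is $\Pi_3$, so $\phi$ is $\Sigma_4$; that is too high. We therefore take a $\Sigma_3$ sentence true in $V^{\mathbb P}$ but false in $V_\kappa^{V^{\mathbb P}}$. A natural choice is ``there is a supercompact cardinal'': supercompactness is $\Pi_2$, so this sentence is $\Sigma_3$. So we want, in $V^{\mathbb P}$:
\begin{enumerate}[(i)]
\item no cardinal below $\kappa$ is supercompact;
\item some supercompact cardinal exists above $\kappa$.
\end{enumerate}
The second supercompact above $\lambda$ supplies (ii), and $\mathbb P$ is small relative to it. Hence the hypothesis reads ``an extendible and a supercompact above it''. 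Indeed, in the intended model $\kappa$ itself is the extendible, not merely supercompact, and the supercompact sits above it. We therefore re-index: take $\kappa$ extendible and $\mu>\kappa$ supercompact. The iteration $\mathbb P\subseteq V_\kappa$ kills all supercompacts (indeed all measurables) below $\kappa$. Then (i) holds, and $\mu$ stays supercompact by Lévy–Solovay, giving (ii). So $\kappa\notin C^{(3)}$ in $V^{\mathbb P}$. Moreover, $\kappa$ is the least tightly $C^{(2)}$-compact cardinal there, since tight $C^{(1)}$-compactness already implies measurability and all measurables below $\kappa$ are gone.

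The main step, and the expected obstacle, is showing that $\kappa$ remains tightly $C^{(2)}$-compact in $V^{\mathbb P}$. We would use extendibility embeddings $j:V_\eta\to V_\zeta$ with $\eta,\zeta\in C^{(2)}$, or equivalently the Bagaria–Goldberg ultrafilter characterization, and lift them through $\mathbb P$ as Magidor does. The crucial points are the following:
\begin{enumerate}[(a)]
\item $j(\mathbb P)$ factors as $\mathbb P*\dot{\mathbb Q}$. Here $\dot{\mathbb Q}$ is the tail of the Magidor iteration, in which $\kappa$ itself has its cofinality changed.
\item We cannot lift genuinely, so we take the derived fine ultrafilter on $P_\kappa(X)$ via the Prikry property, defining $U$ by deciding statements in $j(\mathbb P)$ by direct extensions. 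This gives a $\kappa$-complete fine ultrafilter, that is, strong compactness but not supercompactness, exactly as in Magidor's identity crisis.
\item We must verify the tightness and correctness clause: the resulting ultrapower moves $\kappa$ to a $\Sigma_2$-correct ordinal of $V^{\mathbb P}$. Since $\zeta\in C^{(2)}$ in $V$ and $\mathbb P\in V_\zeta$ is small, $\zeta$ remains in $C^{(2)}$ of $V^{\mathbb P}$, because $\Sigma_2$-correctness of such $\zeta$ is preserved by set forcing of size below $\zeta$.
\end{enumerate}
The delicate bookkeeping is comparing the ultrapower by $U$ with $j$ restricted to $V^{\mathbb P}$, which requires a factor map argument as in Usuba's $n=1$ proof. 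This is where we expect most of the work to lie.
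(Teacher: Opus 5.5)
\textbf{There is a genuine gap, and it is in the choice of $\mathbb P$ itself.} A forcing that kills all measurable cardinals below $\kappa$ necessarily destroys the tight $C^{(2)}$-compactness of $\kappa$.

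The reason is that a tightly $C^{(n)}$-compact cardinal lies in $C^{(n)}$. Given $\alpha<\kappa$ and $\lambda\in C^{(n)}$ above $\alpha$, a witness $j:V\to M$ satisfies $t_j(\lambda)\in (C^{(n)})^M\cap(\alpha,j(\kappa))$, since $t_j(\lambda)\ge\lambda$. By elementarity $C^{(n)}\cap(\alpha,\kappa)\neq\emptyset$, and $C^{(n)}$ is closed. Now ``there is a measurable cardinal above $\alpha$'' is $\Sigma_2$ in the parameter $\alpha$. If $\kappa$ were tightly $C^{(2)}$-compact in $V^{\mathbb P}$, it would be measurable and $\Sigma_2$-correct, hence a limit of measurables. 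In your model $\kappa$ is the least measurable, so it is not even $\Sigma_2$-correct. This is exactly Usuba's $n=1$ phenomenon, and the same forcing cannot be shifted one level up.

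The failure also shows up concretely in your step (c), which misreads the requirement. What must be checked is $t_j(\lambda)\in (C^{(2)})^{M^*}\cap j(\kappa)$ for the target model $M^*$ after the tail of $j(\mathbb P)$. It is not enough that some $\zeta$ stays $\Sigma_2$-correct in $V^{\mathbb P}$. By elementarity, $M^*$ thinks $j(\kappa)$ is its least measurable, so $(C^{(2)})^{M^*}\cap j(\kappa)=\emptyset$. No Prikry-derived ultrafilter can repair this.

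\textbf{The missing idea} is to destroy, below $\kappa$, a $\Pi_2$ property whose unboundedness is $\Sigma_3$ but not $\Sigma_2$, while leaving plenty of measurables. The paper destroys strong compactness, in these steps:
\begin{enumerate}
\item It starts from a model of KMT with $\kappa$ extendible and no supercompact, hence no strongly compact, above $\kappa$. The supercompact above the extendible is used only to build this KMT ground model via the Section 3 construction, not as the witness to a $\Sigma_3$ sentence.
\item It forces GCH.
\item It forces with Goldberg's class forcing $\mathbb Q$, which makes every $\Sigma_2$-correct $\eta$ indestructible under $\eta^+$-directed closed, GCH-preserving forcing.
\item It takes an Easton product of club shootings $\mathbb C_\xi$, one for each $\xi<\kappa$ that was strongly compact in the ground model and is still Mahlo, killing its Mahloness.
\end{enumerate}
The $\Sigma_3$ sentence is then ``there is a strongly compact cardinal'', witnessed by $\kappa$ itself.

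Preserving tightness needs two ingredients with no counterpart in your sketch:
\begin{enumerate}
\item The $2$-reflecting ultrapower $j_1$ is composed with the ultrapower $i_1$ by a normal measure of Mitchell order $0$. Then $\kappa$ is not measurable in $N$, so $\kappa\notin k(\mathcal K)$ and $k(\mathbb P)$ does no forcing at $\kappa$.
\item The indestructibility from $\mathbb Q$ is what keeps $t_k(\lambda)$ in $C^{(2)}$ of the final target model after the tail part $\mathbb S$ of $k(\mathbb P)$.
\end{enumerate}
That indestructibility is the real engine of the $n=2$ case. Since $\Sigma_3$-correctness is never indestructible in this way, it is also why the method stops at $n=2$.
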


The paper is organized as follows. Section 2 gives an alternative proof of Usuba's negative result for \(n=1\).
Instead of using a Magidor iteration of Prikry forcing,
we use an Easton support iterated forcing to add non-reflecting stationary subsets to all the measurable cardinals below $\kappa$. 
Section 3 proves the compatibility of the Kimchi--Magidor theorem (i.e., the class of strongly compact cardinals coincides with the class of supercompact cardinals except at the measurable limits of supercompact cardinals) with an extendible cardinal. 
Section 4 proves Theorem 1.3, and Section 5 discusses the case \(n\ge 3\).

\medskip
\noindent\textbf{Notation.} In addition to the notation introduced above, \(\mathcal{I}\) denotes the class of inaccessible cardinals, and \(\mathcal{S}\) denotes the class of supercompact cardinals. 
For regular cardinals \(\alpha\) and \(\beta\), \(\mathbb{NR}(\alpha,\beta)\) denotes the forcing that adds a non-reflecting stationary subset of $\alpha$,
all of whose elements have cofinality \(\beta\).

\section{Proof for $n=1$}
In this section,
we will prove that tightly $C^{(1)}$-compact cardinal may not be $\Sigma_2$-correct.
\begin{Definition}
    If $j:V\to M$ is an elementary embedding, for any $A\in V$ define
    $t_j(A)=\min \{|B|^M:j''A\subseteq B\}$.
\end{Definition}

\begin{Definition}[Bagaria, Goldberg \cite{bagairagoldbergreflectingmeasures}]
    Let $n$ be a natural number with $n\ge 1$.
    If $\lambda\in C^{(n)}$, we say $\kappa$ is tightly $\lambda$-$C^{(n)}$-compact if there exists $j:V\to M$ such that $t_j(\lambda)\in (C^{(n)})^M\cap j(\kappa)$.
    We say $\kappa$ is tightly $C^{(n)}$-compact if for every $\lambda\in C^{(n)}$, $\kappa$ is tightly $\lambda$-$C^{(n)}$-compact.
\end{Definition}

\begin{Lemma}[Bagaria, Goldberg \cite{bagairagoldbergreflectingmeasures}]
\label{Lemma: closed}
    If $i:V\to M$ and $k:M\to N$ are elementary embeddings, and $k$ is close to $M$, i.e., for every $A\in N$ we have $k^{-1}{''}A\in M$, then setting $j=k\circ i$, we have $t_j=t_k\circ t_i$.
\end{Lemma}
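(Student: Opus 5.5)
We need to show that for every $A\in V$, $t_j(A)=t_k(t_i(A))$, where the right-hand side means $t_k$ applied to the cardinal $t_i(A)$ computed in $M$, i.e. $t_k(\mu)=\min\{|C|^N : k''\mu\subseteq C\}$ for $\mu\in M$. Fix $A$ and put $\mu=t_i(A)$. Fix $B\in M$ with $i''A\subseteq B$ and $|B|^M=\mu$, together with a bijection $f:\mu\to B$ in $M$. The plan is to prove the two inequalities $t_j(A)\le t_k(\mu)$ and $t_k(\mu)\le t_j(A)$ separately. Only the second one uses closeness.

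For $t_j(A)\le t_k(\mu)$, let $C\in N$ with $k''\mu\subseteq C$ and $|C|^N=t_k(\mu)$. Define $D=k(f)''(C\cap k(\mu))\in N$. Then $|D|^N\le |C|^N$. For $a\in A$ write $i(a)=f(\xi)$ with $\xi<\mu$. Then $j(a)=k(i(a))=k(f)(k(\xi))$, and $k(\xi)\in C\cap k(\mu)$, so $j''A\subseteq D$. Hence $t_j(A)\le t_k(\mu)$.

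For $t_k(\mu)\le t_j(A)$, let $D\in N$ with $j''A\subseteq D$ and $|D|^N=t_j(A)$. Define $E=k^{-1}{}''D$. By closeness $E\in M$, and shrinking, $E'=E\cap B\in M$ still contains $i''A$. By minimality of $\mu$ we get $|E'|^M\ge\mu$. Now set $C=k(f)^{-1}{}''(D\cap k(B))\in N$, a subset of $k(\mu)$ of $N$-size at most $|D|^N$. This set contains $k''(f^{-1}{}''E')$, and $f^{-1}{}''E'\subseteq\mu$ has $M$-cardinality at least $\mu$, hence exactly $\mu$. The issue is that we need $k''\mu\subseteq C$, not just a size-$\mu$ subset.

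This is the main obstacle: choosing $f$ or $B$ so that the preimage set is all of $\mu$. I would fix it by replacing $B$ with $E'$ from the start. Since $E'\subseteq B$, $i''A\subseteq E'$ and $|E'|^M=\mu$, we may choose $f:\mu\to E'$ a bijection in $M$. Then every $\xi<\mu$ has $f(\xi)\in E'$, so $k(f)(k(\xi))=k(f(\xi))\in D$. Thus $k''\mu\subseteq k(f)^{-1}{}''D=:C$, and $|C|^N\le |D|^N$ because $k(f)$ is injective in $N$. Hence $t_k(\mu)\le t_j(A)$. The point of closeness is exactly that $E'$, the trace of $D$ on $M$, lies in $M$, so it can be used as the witness set in $M$.

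Combining the two inequalities gives $t_j(A)=t_k(t_i(A))$.
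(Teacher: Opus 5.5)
Your proof is correct: the bound $t_j(A)\le t_k(t_i(A))$ comes from pushing a minimal cover of $k''\mu$ through $k(f)$, and the reverse bound uses closeness exactly once, to get $k^{-1}{''}D\in M$ as a cover of $i''A$, after which you pull $D$ back along $k(g)$ for a bijection $g:\mu\to k^{-1}{''}D\cap B$ in $M$. The paper gives no proof of this lemma (it only cites Bagaria--Goldberg), and your two-inequality argument is the standard one; the detour through $B$ in the last step is unnecessary, because any injection $\mu\to k^{-1}{''}D$ in $M$ already works, and such an injection exists by minimality of $\mu$.
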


\begin{Lemma}[Bagaria, Goldberg\cite{bagairagoldbergreflectingmeasures}]
\label{Lemma: 2}
    If $j:V\to M$ is an ultrapower embedding induced by some $\kappa$-complete ultrafilter, $\lambda$ is a cardinal, and $\mathrm{cf}(\lambda)\le \kappa$, then $t_j(\lambda)=\sup_{\delta<\lambda}t_j(\delta)$.
\end{Lemma}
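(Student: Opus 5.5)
The inequality $t_j(\lambda)\ge\sup_{\delta<\lambda}t_j(\delta)$ is immediate: if $B\in M$ and $j''\lambda\subseteq B$, then $j''\delta\subseteq B$ for every $\delta<\lambda$. So the work is the reverse inequality. The plan is to take tight covers of $j''\lambda_\alpha$ along a cofinal sequence, glue them into a single cover of $j''\lambda$ inside $M$ using the ultrapower representation, and check that gluing costs no extra cardinality in $M$.

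\textbf{Setup.} Let $U$ be the $\kappa$-complete ultrafilter on $I$ with $j=j_U$, and let $\nu=\mathrm{cf}(\lambda)\le\kappa$. We may assume $\lambda$ is a limit cardinal; if $\lambda=\delta^+$ were a successor, then $\nu=\lambda\le\kappa$ and $\lambda$ is below the critical point, which is trivial. Fix an increasing cofinal sequence $\langle\lambda_\alpha:\alpha<\nu\rangle$ in $\lambda$, and put $\mu=\sup_{\alpha<\nu}t_j(\lambda_\alpha)$. For each $\alpha$ choose $B_\alpha\in M$ with $j''\lambda_\alpha\subseteq B_\alpha$ and $|B_\alpha|^M=t_j(\lambda_\alpha)$, and write $B_\alpha=[f_\alpha]_U$ with $f_\alpha:I\to V$. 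Let $F:I\to V$ be $F(i)=\langle f_\alpha(i):\alpha<\nu\rangle$. By Łoś, $[F]_U$ is a sequence in $M$ of length $j(\nu)$ whose $j(\alpha)$-th entry is $B_\alpha$.

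\textbf{Key point: reindexing.} The critical point of $j$ is at least $\kappa$, and $\nu\le\kappa$, so $j(\alpha)=\alpha$ for all $\alpha<\nu$. We split according to $j(\nu)$.
\begin{itemize}
\item If $j(\nu)=\nu$ (for instance when $\nu<\kappa$, or when the critical point exceeds $\kappa$), let $f(i)=\bigcup_{\alpha<\nu}f_\alpha(i)$. Then $[f]_U=\bigcup_{\alpha<\nu}[F]_\alpha=\bigcup_{\alpha<\nu}B_\alpha$, which lies in $M$ and covers $j''\lambda$.
\item In the remaining case $\nu=\kappa$ and the critical point is exactly $\kappa$. Then $\kappa=[d]_U$ for some $d:I\to\kappa$. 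Let $f(i)=\bigcup_{\alpha<d(i)}f_\alpha(i)$. By Łoś, $[f]_U$ is the union in $M$ of the first $[d]_U=\kappa$ entries of $[F]_U$, which is again $\bigcup_{\alpha<\kappa}B_\alpha$.
\end{itemize}
In both cases $B:=\bigcup_{\alpha<\nu}B_\alpha\in M$ and $j''\lambda\subseteq B$. This truncation step is the main obstacle: one must avoid the entries of $[F]_U$ indexed beyond $\nu$, which could be much larger than $\mu$. The representation of $\kappa$ by $d$ is exactly what makes the truncation possible.

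\textbf{Cardinality bound.} In $M$, $B$ is a $\nu$-indexed union of sets of size at most $\mu$, so $|B|^M\le\nu\cdot\mu$. We have $\nu\le\lambda_\alpha$ for large $\alpha$, and $t_j(\lambda_\alpha)\ge\lambda_\alpha$: the order type of $j''\lambda_\alpha$ is $\lambda_\alpha$, and any $B'\in M$ containing it contains a subset of that order type, so $|B'|^M\ge\lambda_\alpha$ computed in $M$. Hence $\nu\le\mu$ and $|B|^M\le\mu$. Therefore $t_j(\lambda)\le\mu$, which together with the trivial inequality gives $t_j(\lambda)=\sup_{\delta<\lambda}t_j(\delta)$.
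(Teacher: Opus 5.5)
Your gluing argument is correct for limit cardinals $\lambda$. The paper states this lemma without proof, citing Bagaria--Goldberg, so there is no in-paper argument to compare against; your approach is the natural one. Your computation with $F$ and $d$ reproves the standard fact that an ultrapower by a $\kappa$-complete ultrafilter is closed under $\kappa$-sequences. Hence $\langle B_\alpha:\alpha<\nu\rangle\in M$, and $\bigcup_{\alpha<\nu}B_\alpha$ is a cover of $j''\lambda$ lying in $M$ with $M$-cardinality at most $\nu\cdot\mu$. Two points need correcting.

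\textbf{The successor case.} It is not ``trivial''; the identity fails there. Suppose $\lambda=\delta_0^+$ with $\mathrm{cf}(\lambda)\le\kappa$. Then $\lambda\le\kappa$, and since a critical point is measurable, $j$ is the identity on $\lambda$.
\begin{itemize}
\item $j''\lambda=\lambda\in M$, so $t_j(\lambda)=\lambda$.
\item For every $\delta<\lambda$, $t_j(\delta)=|\delta|\le\delta_0$, so $\sup_{\delta<\lambda}t_j(\delta)=\delta_0<\lambda$.
\end{itemize}
So the lemma must be read for limit cardinals $\lambda$. That is the only case the paper uses, where $\lambda\in C^{(n)}$ and $\mathrm{cf}(\lambda)=\kappa$. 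You should restrict to it explicitly rather than dismiss the successor case.

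\textbf{The bound $\nu\le\mu$.} Your claim ``$\nu\le\lambda_\alpha$ for large $\alpha$'' is false when $\lambda$ is a regular limit cardinal. Then $\nu=\lambda>\lambda_\alpha$ for all $\alpha$, for example when $\lambda=\kappa=\mathrm{crit}(j)$. Also, ``$t_j(\lambda_\alpha)\ge\lambda_\alpha$'' needs $\lambda_\alpha$ to be a cardinal, since $t_j(\lambda_\alpha)$ is always an $M$-cardinal.

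What holds in general is $t_j(\delta)\ge|\delta|$. If $B'\in M$ covers $j''\delta$, then $|B'|^M\ge|B'|^V\ge|\delta|$ because $M\subseteq V$. Hence
\[
\mu\ \ge\ \sup_{\alpha<\nu}|\lambda_\alpha|\ =\ \lambda\ \ge\ \nu .
\]
This is exactly what you need for $|B|^M\le\nu\cdot\mu=\mu$. With these repairs the proof goes through.
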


\begin{Lemma}[Bagaria, Goldberg\cite{bagairagoldbergreflectingmeasures}]
\label{Lemma: 3}
    If $\kappa<\lambda$ are cardinals, $\mathrm{cf}(\lambda)<\kappa$, and $j_U:V\to M_U$ is the ultrapower embedding induced by a $\kappa$-complete ultrafilter $U$ on some set of size less than $\lambda$, then $t_j(\lambda)=j(\lambda)$.
\end{Lemma}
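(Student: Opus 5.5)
The plan is to prove the two inequalities $t_j(\lambda)\le j(\lambda)$ and $t_j(\lambda)\ge j(\lambda)$ separately, where $j=j_U$. The upper bound is immediate, because $j''\lambda\subseteq j(\lambda)$ and $|j(\lambda)|^M=j(\lambda)$ (as $j(\lambda)$ is a cardinal in $M$). All the work is in the lower bound. For it, we fix an arbitrary $B\in M$ with $j''\lambda\subseteq B$ and show that $|B|^M\ge j(\lambda)$.

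\emph{Step 1 (continuity at regular cardinals above $|X|$).} Let $X$ be the underlying set of $U$, so $|X|<\lambda$. We check that for every regular cardinal $\mu>|X|$ we have $j(\mu)=\sup j''\mu$. Indeed, any $[f]_U<j(\mu)$ is represented by some $f:X\to\mu$. Since $\mu$ is regular and $|X|<\mu$, the range of $f$ is bounded by some $\alpha<\mu$, and so $[f]_U<j(\alpha)$.

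\emph{Step 2 (continuity at $\lambda$).} The assumption $\mathrm{cf}(\lambda)<\kappa<\lambda$ makes $\lambda$ singular, hence a limit cardinal. Since $U$ is $\kappa$-complete, $\mathrm{crit}(j)\ge\kappa>\mathrm{cf}(\lambda)$. Fix a cofinal sequence $\langle\lambda_\xi:\xi<\mathrm{cf}(\lambda)\rangle$ in $\lambda$. Then $j$ of this sequence is $\langle j(\lambda_\xi):\xi<\mathrm{cf}(\lambda)\rangle$, and it is cofinal in $j(\lambda)$. Hence $j(\lambda)=\sup_{\delta<\lambda}j(\delta)$. In particular, since the successor cardinals $\delta^+$ with $|X|\le\delta<\lambda$ are cofinal in $\lambda$, the ordinals $j(\delta^+)$ for such $\delta$ are cofinal in $j(\lambda)$.

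\emph{Step 3 (the counting argument).} Fix $\delta$ with $|X|\le\delta<\lambda$ and put $\mu=\delta^+$. Then $\mu$ is regular and $\mu>|X|$, so Step 1 applies. The set $B\cap j(\mu)$ lies in $M$ and contains $j''\mu$, so by Step 1 it is cofinal in $j(\mu)$. By elementarity $j(\mu)$ is regular in $M$, so $|B\cap j(\mu)|^M=j(\mu)$, which gives $|B|^M\ge j(\mu)$. Taking the supremum over all such $\delta$ and using Step 2, we get $|B|^M\ge j(\lambda)$. Since $B$ was arbitrary, $t_j(\lambda)\ge j(\lambda)$, and with the upper bound this gives equality.

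The only delicate point is Step 1. It is exactly where the hypothesis that $U$ lives on a set of size less than $\lambda$ is used, since it is what makes room for regular cardinals strictly between $|X|$ and $\lambda$. The rest is routine bookkeeping with continuity and regularity in $M$.
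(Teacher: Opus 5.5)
The paper gives no proof of this lemma; it is quoted from Bagaria--Goldberg, so there is no in-text argument to compare with. Judged on its own, your proof is correct and complete.

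\begin{itemize}
\item The bound $t_j(\lambda)\le j(\lambda)$ is immediate, as you say.
\item Step~1 (continuity of $j$ at every regular $\mu>|X|$) uses only $|X|<\mu$.
\item Step~2 uses $\kappa$-completeness only to get $j(\xi)=\xi$ for $\xi<\kappa$. That is the right way to phrase it, since it also covers a principal $U$, where there is no critical point.
\item Step~3 correctly uses that a set in $M$ cofinal in the $M$-regular cardinal $j(\delta^+)$ has $M$-cardinality $j(\delta^+)$. So you only need the monotone bound $t_j(\lambda)\ge j(\delta^+)$, not the paper's finer continuity lemma for $\mathrm{cf}(\lambda)\le\kappa$.
\end{itemize}

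Your argument also shows exactly where $\mathrm{cf}(\lambda)<\kappa$ enters. For every limit cardinal $\lambda>|X|$, Steps~1 and~3 give $t_j(\lambda)\ge\sup j''\lambda$. The ordinal $\sup j''\lambda$ lies in $M$ and covers $j''\lambda$, so $t_j(\lambda)=\sup j''\lambda$. Hence the conclusion $t_j(\lambda)=j(\lambda)$ is equivalent to continuity of $j$ at $\lambda$, which is precisely what your Step~2 supplies.
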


\begin{proof}[Proof of \autoref{Theorem: Usuba}]
    We present a proof different from Usuba's. The proof below is essentially the same as that in Theorem 2 in  \cite{Aptercummingsidentity}.
    Assume that \(\kappa\) is a supercompact cardinal and that \(\mathrm{GCH}\) holds.
    We further assume that there is no measurable cardinal above \(\kappa\).
    Define an Easton-support iteration \(P_\kappa=\langle P_\alpha,\dot Q_\alpha\rangle_{\alpha<\kappa}\) as follows:
    \(P_0=\mathsf{Add}(\omega,1)\);
    suppose \(P_\alpha\) has been defined and \(\alpha\) is a measurable cardinal,
    then let \(\dot Q_\alpha=\mathbb{NR}(\alpha,\omega)\);
    otherwise, let \(\dot Q_\alpha\) be a trivial forcing.
    Assume \(G_\kappa\) is \(P_\kappa\)-generic over \(V\).
    By the gap forcing theorem \cite{hamkins2001gap},
    there is no measurable cardinal below \(\kappa\).
    Now we verify that \(\kappa\) is tightly \(C^{(1)}\)-compact in \(V[G_\kappa]\):
    pick \(\lambda\in C^{(1)}-(\kappa+1)\),
    and let \(j:V\to M\) be the ultrapower embedding induced by some \(\kappa\)-complete fine normal measure on \(\mathcal P_\kappa(\lambda^+)\).
    Since \(\kappa\) is measurable in \(M\),
    pick a normal measure \(U\) of Mitchell order \(0\) on \(\kappa\) in \(M\),
    and let \(i:M\to N\) be the ultrapower induced by \(U\).
    Let \(k=i\circ j:V\to N\) be the composition.

    \begin{Claim}
        \(k\) is a tightly \(\lambda\)-\(C^{(1)}\)-compact embedding.
    \end{Claim}
\label{Claim: 1}
    \begin{proof}[Proof of Claim]
        Notice that \(t_j(\lambda)=\lambda\in (C^{(1)})^M\). Since \(i\) is close to \(M\), by \autoref{Lemma: closed} we have
        \[
        t_k(\lambda)=(t_i\circ t_j)(\lambda)=t_i(t_j(\lambda))=t_i(\lambda).
        \]
        We show that \(t_i(\lambda)\in (C^{(1)})^N\):
        if \(\operatorname{cf}(\lambda)<\kappa\), then by \autoref{Lemma: 3}, \(t_i(\lambda)=i(\lambda)\in (C^{(1)})^N\).
        If \(\operatorname{cf}(\lambda)=\kappa\), then \(\lambda\in \lim(C^{(1)})\), so \(M\models\lambda\in \lim(C^{(1)})\).
        Let \(\delta\in \lambda\cap C^{(1)}\) be a successor element of \(C^{(1)}\) above $\kappa$.
 Then \(\operatorname{cf}^M(\delta)=\operatorname{cf}(\delta)=\omega\) and \(\delta\in (C^{(1)})^M\), 
 so by \autoref{Lemma: 3}, \(t_i(\delta)=i(\delta)\in (C^{(1)})^N\).
        By \autoref{Lemma: 2},
        \[t_i(\lambda)=\sup_{\delta<\lambda}t_i(\delta)=\sup_{\delta<\lambda}i(\delta)\in (\lim(C^{(1)}))^N,
        \]
        hence \(t_i(\lambda)\in (C^{(1)})^N\).
        If \(\operatorname{cf}(\lambda)>\kappa\), then by \(\mathrm{GCH}\), \(i(\lambda)=\lambda\), so \(t_i(\lambda)=\lambda\in (C^{(1)})^N\).
        Thus \(k\) is a tightly \(\lambda\)-\(C^{(1)}\)-compact embedding.
        \renewcommand{\qedsymbol}{$\dashv$}
    \end{proof}

    Let \(j(P_\kappa)=P_\kappa*\dot Q_{\kappa}*P^M_{\kappa,j(\kappa)}\).
    Since there is no measurable cardinal above \(\kappa\),
    \(P^M_{\kappa,j(\kappa)}\) is trivial on the interval \((\kappa,\lambda^+]\).
    Then
    \[
    k(P_\kappa)=i(P_\kappa*\dot Q_\kappa*P^M_{\kappa,j(\kappa)})
    =P_{\kappa}*P^N_{\kappa,i(\kappa)}*i(\dot Q_\kappa)*i(P^M_{\kappa,j(\kappa)}).
    \]
    By \(\mathrm{GCH}\) in $V[G_\kappa]$,
 there are only \(2^\kappa=\kappa^+\) many dense subsets of \(P_{\kappa,i(\kappa)}^N*i(\dot Q_\kappa)\) in \(N[G_\kappa]\).
    Since \(N[G_\kappa]^\kappa\cap V[G_\kappa]\subseteq N[G_\kappa]\) and \(P_{\kappa,i(\kappa)}^N*i(\dot Q_\kappa)\) is \(\kappa^+\)-closed,
    so we can find \(H\in V[G_\kappa]\) that is \(P_{\kappa,i(\kappa)}^N*i(\dot Q_\kappa)\)-generic over \(N[G_\kappa]\).

    To perform the lifting,
    we still need an \(i(P^M_{\kappa,j(\kappa)})\)-generic filter over \(N[G_\kappa*H]\).
    To do this,
    define the term forcing
    \[
    \mathcal T=\{\sigma\in M:\mathbbm{1}\Vdash^M_{P_\kappa*\dot Q_\kappa} \sigma\in P^M_{\kappa,j(\kappa)}\}
    \]
    and define \(\sigma\le_\mathcal T \tau\) if and only if
    \[
    \mathbbm{1}\Vdash^M_{P_\kappa*\dot Q_\kappa}\sigma \le_{P^M_{\kappa,j(\kappa)}} \tau.
    \]
    Even though \(\mathcal T\) is a class, we can use Scott's trick to make it a set.
    One may also assume that $\mathcal T$ is of minimum size in $M$.
    It is not hard to see that \(\mathcal T\in M\) is a \(\lambda^{++}\)-strategically closed forcing in both $M$ and $V$ and that \(M\models |\mathcal T|\le j(\kappa)\).
    Therefore in $V$,
 there are only \((2^{j(\kappa)})^M\le \lambda^{++}\) many dense subsets of \(\mathcal T\) in \(M\).
    Since \(M^{\lambda^+}\subseteq M\),
    we can find a \(\mathcal T\)-generic filter \(K\) over \(M\) in $V$.
    Since all dense subsets of \(i(\mathcal T)\) in \(N\) are of the form \(i(f)(\kappa)\) for some function \(f:\kappa\to \mathcal P (\mathcal T)\) in $M$, 
    and \(\mathcal T\) is \(\kappa^+\)-strategically closed,
    \(i''K\) generates an \(i(\mathcal T)\)-generic filter \(J\) over \(N\).
    Define \(I=\{\sigma_{G_\kappa*H}:\sigma \in J\}\).
    It is not hard to see that \(I\) is \(i(P^M_{\kappa,j(\kappa)})\)-generic over \(N[G_\kappa*H]\).
    By the standard lifting argument  \cite{CummingsiteratedforcingHandbook},
    we can lift \(k\) to \(k^*:V[G_\kappa]\to N[G_\kappa][H][I]\) in $V[G_\kappa]$.

    Finally, it remains to show that
    \[
    t_{k^*}(\lambda)=t_k(\lambda)\in (C^{(1)})^{N[G_\kappa][H][I]}.
    \]
    For the first equality,
    suppose otherwise that \(t_{k^*}(\lambda)<t_k(\lambda)\).
    Let \(X\in N[G_\kappa*H*I]\) be such that \(X\) covers \(k^{*\prime\prime}\lambda=\) and \(N[G_\kappa*H*I]\models |X|=t_{k^*}(\lambda)\).
    Since \(i(P^M_{\kappa,j(\kappa)})\) is \((i(\lambda)^+)^{N[G_\kappa*H]}\)-strategically closed and $t_k(\lambda)=t_i(\lambda)\le i(\lambda)$,
    \(X\in N[G_\kappa*H]\).
    Now since
    \[
    |P_\kappa*P^N_{\kappa,i(\kappa)}*i(\dot Q_\kappa)|=i(\kappa)<\lambda\le t_{k^*}(\lambda),
    \]
    by the \(\lambda\)-c.c. of \(P_\kappa*P^N_{\kappa,i(\kappa)}*i(\dot Q_\kappa)\),
    there exists \(Y\in N\) such that \(Y\supseteq X\supseteq k^*{''}\lambda=k''\lambda\) and \(|Y|^N=t_{k^*}(\lambda)<t_k(\lambda)\),
    which is a contradiction.
    For the second membership,
    because \(P_\kappa*P^N_{\kappa,i(\kappa)}*i(\dot Q_{\kappa})\) is small relative to \(t_k(\lambda)\),
    we have \(t_k(\lambda)\in (C^{(1)})^{N[G_\kappa*H]}\).
    Since \(i(P^M_{\kappa,j(\kappa)})\) is \((i(\lambda)^+)^N\)-strategically closed and $t_k(\lambda)=t_i(\lambda)\le i(\lambda)$,
    it follows that $t_k(\lambda)\in (C^{(1)})^{N[G_\kappa][H][I]}$.
\end{proof}

\begin{Corollary}[Usuba]
    Modulo a supercompact cardinal, it is consistent that the least tightly $C^{(1)}$-compact cardinal is not $\Sigma_2$-correct.
\end{Corollary}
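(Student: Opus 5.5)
The corollary follows from the construction in the proof of \autoref{Theorem: Usuba}; what remains is to check minimality and the failure of $\Sigma_2$-correctness in $V[G_\kappa]$. I would start from a model of $\mathrm{GCH}$ with a supercompact $\kappa$ and no measurable cardinals above $\kappa$. Such a model is obtained by truncating at the least inaccessible above $\kappa$: if $\theta$ is the least inaccessible cardinal above $\kappa$, then $V_\theta$ has no inaccessible, hence no measurable, cardinals above $\kappa$. Moreover $\kappa$ remains supercompact in $V_\theta$, since for $\lambda<\theta$ the normal measures on $\mathcal P_\kappa(\lambda)$ lie in $V_\theta$. $\mathrm{GCH}$ can first be forced by the standard reverse Easton iteration, which preserves supercompactness. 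I then force with the Easton-support iteration $P_\kappa$ described above.

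In $V[G_\kappa]$ the proof above shows that $\kappa$ is tightly $C^{(1)}$-compact. Next I would verify the following three facts there.
\begin{enumerate}[(i)]
\item There is no measurable cardinal below $\kappa$. Each measurable $\alpha<\kappa$ of $V$ gets a non-reflecting stationary subset of $\alpha$ by stage $\alpha$. The tail of the iteration is sufficiently strategically closed to preserve this set's stationarity and its non-reflection, so $\alpha$ is not weakly compact and in particular not measurable. By the gap forcing theorem \cite{hamkins2001gap}, no new measurables are created.
\item There is no measurable cardinal above $\kappa$, again by gap forcing, since $V$ had none.
\item $\kappa$ remains measurable. This follows from the lifting argument above, which produced an embedding $k^*$ with critical point $\kappa$.
\end{enumerate}

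Now let $\phi$ be the $\Sigma_2$ sentence ``there exists a measurable cardinal''. I would first show that every tightly $C^{(1)}$-compact cardinal is measurable. Take $\lambda\in C^{(1)}$ above it and a witnessing $j$. Then $t_j(\lambda)<j(\kappa)$ forces $j$ to be nontrivial with critical point at most $\kappa$; the standard argument of Bagaria--Goldberg gives $\mathrm{crit}(j)=\kappa$, so $\kappa$ is measurable. Granting this, $\kappa$ is the unique measurable cardinal in $V[G_\kappa]$, hence also the least tightly $C^{(1)}$-compact cardinal. Finally, $V[G_\kappa]\models\phi$ while $V[G_\kappa]_\kappa\models\neg\phi$, because a measure witnessing measurability of some $\alpha<\kappa$ in $V_\kappa$ would be an actual measure, and there is none by (i). Therefore $\kappa\notin C^{(2)}$.

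The main obstacle is fact (i). The gap forcing theorem handles (ii) and prevents new measurables, but it does not by itself kill the ground-model measurables below $\kappa$. For that step I would argue directly that the stationary set added at stage $\alpha$ stays non-reflecting and stationary. Since the stage-$\alpha$ forcing is $\alpha$-strategically closed, the part of the iteration up to and including stage $\alpha$ factors as a small forcing followed by that stage. The tail past $\alpha$ is $\alpha^+$-strategically closed, so it adds no new subsets of $\alpha$ and preserves both stationarity and the non-reflection property.
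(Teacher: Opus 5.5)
Your proposal is correct and follows the paper's route. In $V[G_\kappa]$ the cardinal $\kappa$ is the only measurable, so the $\Sigma_2$ sentence ``there is a measurable cardinal'' holds in $V[G_\kappa]$ but fails in $V[G_\kappa]_\kappa$. You also supply details the paper leaves implicit: killing the measurables below $\kappa$ with the non-reflecting stationary sets, gap forcing to rule out new measurables, and leastness of $\kappa$.

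One small correction. With the definition of tight compactness as stated in this paper, which does not require $\mathrm{crit}(j)=\kappa$, it is not true that every tightly $C^{(1)}$-compact cardinal is measurable. For instance, the embeddings $k^*$ also witness that the non-measurable $\kappa^+$ is tightly $C^{(1)}$-compact in $V[G_\kappa]$. You do not need that claim, though. Your own observation that a witnessing $j$ has a measurable critical point $\le\kappa'$ already forces $\kappa'\ge\kappa$.
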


\begin{proof}[Proof]
   
 Consider the model $V[G_\kappa]$ in \autoref{Theorem: Usuba}.
 Define the \( \Sigma_2 \)-sentence \( \phi \equiv \) ``there exists a measurable cardinal.'' 
 \( \phi \) is true in \( V[G_\kappa] \). However, because \( \kappa \) is the least measurable cardinal in $V[G_\kappa]$,
 there are no measurable cardinals below it; consequently, \( \phi \) fails to hold in the initial segment \( V[G_\kappa]_\kappa \).
This immediately rules out \( \Sigma_2 \)-correctness for \( \kappa \) in $V[G_\kappa]$. 
\end{proof}

\section{Kimchi-Magidor Theorem is compatible with an extendible cardinal}
In this section we will show that Kimchi-Magidor Theorem is compatible with an extendible cardinal.
\begin{Definition}
    We define the global Menas function $\ell$ as follows: $\langle\alpha,\beta+1\rangle\in \ell$ iff
    \begin{enumerate}
        \item $\ell''\alpha\subseteq\alpha$;
        \item $\alpha\in \mathcal I-\mathcal S$;
        \item $\beta$ is the least ordinal such that “there is no $\mathcal P_\alpha(\beta)$-fine normal measure $U$ with $j_U(\ell\upharpoonright\alpha)(\alpha)>\beta$”.
    \end{enumerate}
\end{Definition}

Notice that for any $\delta\in C^{(2)}$, $\ell\cap V_\delta=\ell^{V_\delta}$ and $\ell''\delta\subseteq\delta$.
It is easy to see that $\ell$ is undefined on $\mathcal S$, and for every $\xi\in \mathcal S$, $\ell\upharpoonright\xi:\xi\to V_\xi$ is a function on $\xi$ with the \textit{Menas property},
which means for all $\alpha\ge \xi$,
there exists some fine normal measure $U$ on $\mathcal P_\xi(\alpha)$ such that $j_U(\ell \upharpoonright\xi)(\xi)>\alpha$.

\begin{Definition}
    Let $\mathcal C$ denote the smallest closed unbounded class containing $\mathcal S\cup\{\omega\}$.
    Write $\Phi(\alpha,\beta,Q)$ for “$Q\in V_\beta$ is an $\alpha$-directed closed forcing”.
    Let $\ell$ be the global Menas function defined above. Now define the Easton support iteration $\mathbb P^{\ell}=\langle \mathbb P^{\ell}_\beta,\dot{\mathbb Q}^{\ell}_\beta\rangle_{\beta<\mathsf{Ord}}$:
    Let $\mathbb P^{\ell}_0=\mathsf{Add}(\omega,1)$;
    assume $\mathbb P^{\ell}_\beta$ is defined and $\beta\in \mathrm{dom}(\ell)$,
    since $\mathcal C$ is closed,
    there exists $\gamma\in \mathcal C$ such that $\gamma=\sup (\beta\cap \mathcal C)$.
    Now define $\dot{\mathbb Q}^{\ell}_\beta=\dot{\mathbb Q}^{\ell}_{\beta,0}*\dot{\mathbb Q}^{\ell}_{\beta,1}$, where
    $$\dot{\mathbb Q}^{\ell}_{\beta,0}=\bigoplus\{Q:V^{\mathbb P^{\ell}_\beta}\models\Phi(\beta,\ell(\beta),Q)\}$$
    and $\dot{\mathbb Q}^\ell_{\beta,1}=\mathbb{NR}(\ell(\beta)^+,\gamma^*)$,
    where $\gamma^*$ is the least regular cardinal $\ge\gamma$.
    Otherwise let $\dot{\mathbb Q}^{\ell}_\beta$ be trivial.
    We call $\mathbb P^\ell$ \textit{the standard forcing for KMT} (Kimchi-Magidor Theorem).
\end{Definition}

\begin{Theorem}[Kimchi–Magidor\cite{kimchi1980independence}]
\label{Theorem :KMT}
    Let $\mathbb P^\ell$ be the standard forcing for KMT, and suppose $G$ is $(V,\mathbb P^\ell)$-generic. Then the following hold:
    \begin{enumerate}
        \item $\mathcal S=\mathcal S^{V[G]}$;
        \item every $\delta\in \mathcal S$ is Laver indestructible in $V[G]$;
        \item $V[G]\models \mathrm{KMT}$, i.e.,
        the class of strongly compact cardinals coincides with the class of supercompact cardinals, except at measurable limits.
    \end{enumerate}
\end{Theorem}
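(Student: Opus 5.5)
Clause (1) splits into two inclusions, clause (2) comes out of the same lifting argument, and clause (3) is the Kimchi--Magidor analysis using the non-reflecting stationary sets added at the stages $\beta\in\mathrm{dom}(\ell)$. Throughout we work with the class iteration $\mathbb P^\ell$. First we check that it is tame, so that $V[G]\models\mathsf{ZFC}$. At a stage $\beta$ the forcing $\mathbb Q^\ell_\beta$ lives in $V_{\ell(\beta)^{++}}$, and $\ell''\alpha\subseteq\alpha$ on the domain. So for every closure point $\delta$ of $\ell$ the iteration factors as $\mathbb P^\ell_\delta*\dot{\mathbb P}^\ell_{\delta,\infty}$, where $\mathbb P^\ell_\delta$ is small and the tail is forced to be $\delta$-strategically closed. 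The tail is in fact $\delta$-directed closed up to the $\mathbb{NR}$ components, whose closure is at least $\gamma^*$. In particular, for $\delta\in\mathcal S$ the first nontrivial stage of the tail beyond $\delta$ lies above $\delta$, since $\ell$ is undefined on $\mathcal S$.

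\textbf{Supercompacts are preserved and indestructible, giving (2) and $\mathcal S\subseteq\mathcal S^{V[G]}$.} Fix $\delta\in\mathcal S$ and a $\delta$-directed closed $\dot{\mathbb R}\in V^{\mathbb P^\ell}$. Given $\lambda$, use the Menas property of $\ell\upharpoonright\delta$ to pick a normal measure on $\mathcal P_\delta(\theta)$, with $\theta\gg\lambda$ and $\theta$ above the rank of the relevant part of $\dot{\mathbb R}$, such that $j(\ell\upharpoonright\delta)(\delta)>\theta$. Two observations about $j$ are needed. By elementarity and item 1 of the definition of $\ell$, $\delta\in\mathrm{dom}(j(\ell))$ holds in $M$ exactly when $\delta\in\mathcal I-\mathcal S^M$. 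Since $\ell$ is undefined on supercompacts, $j(\ell)(\delta)$ is then computed in $M$ as a Menas-type value beyond $\theta$.

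Then $j(\mathbb P^\ell_\delta)=\mathbb P^\ell_\delta*\dot{\mathbb Q}_\delta*\text{tail}$ in $M$. The component $\dot{\mathbb Q}_{\delta,0}$ is the lottery sum, and we choose the coordinate $\mathbb R$ (more precisely, $\mathbb R$ times the rest of $\mathbb P^\ell_{\delta,\infty}$ below $\theta$). This is the Laver-style choice. We then force with $\mathbb{NR}(j(\ell)(\delta)^+,\gamma^*)$. This lives above $\theta$, so it is closed enough, and a generic for it is built over $M$ using $\theta$-closure and GCH-counting. The tail of the iteration is handled by a master condition, which exists because $j''(G*g)$ is directed and $j(\mathbb R)$ is $j(\delta)$-directed closed. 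Finally, the iteration beyond $\theta$ is $\theta$-strategically closed and so adds no new $\mathcal P_\delta(\lambda)$-measures; a standard factoring argument reduces to the initial segment $V[G_\theta]$. This shows $\delta$ is indestructibly supercompact in $V[G]$.

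\textbf{Nothing new becomes supercompact, and KMT holds: (3) and $\mathcal S^{V[G]}\subseteq\mathcal S$.} Let $\kappa$ be strongly compact in $V[G]$ and not a measurable limit of $V$-supercompacts. By the gap forcing theorem (\cite{hamkins2001gap}, via the $\mathsf{Add}(\omega,1)$ at stage $0$), $\kappa$ is measurable in $V$, and strongly compact embeddings of $V[G]$ restrict to $V$. We argue by contradiction, assuming $\kappa\notin\mathcal S$. Since $\kappa$ is inaccessible and is not a measurable limit of supercompacts, the set $\mathrm{dom}(\ell)$ contains stages $\beta\ge\kappa$ arbitrarily close above $\kappa$. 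Indeed $\ell''\alpha\subseteq\alpha$ holds on a club, and $\kappa$ itself is in $\mathrm{dom}(\ell)$ whenever $\ell''\kappa\subseteq\kappa$. Now take $\beta\in\mathrm{dom}(\ell)$ with $\beta\ge\kappa$ and $\gamma=\sup(\beta\cap\mathcal C)<\kappa$; this uses that $\kappa$ is not a limit of $\mathcal S$, or handles the boundary case where $\gamma$ is a non-measurable limit. Stage $\beta$ adds a non-reflecting stationary subset of $\ell(\beta)^+$ of cofinality $\gamma^*<\kappa$. By Solovay's theorem, strong compactness of $\kappa$ forces stationary reflection for sets of cofinality $<\kappa$ points above $\kappa$, so this is a contradiction. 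The remaining subcase, where $\kappa$ is a non-measurable limit of supercompacts, lies in $\mathcal C$, and there the cofinality $\gamma^*\ge\kappa$ leaves no contradiction. This is exactly where KMT permits exceptions, but only at measurable limits. A non-measurable limit in $V[G]$ is not strongly compact anyway, so the contradiction stands and (3) follows. Applying the same argument to a supercompact of $V[G]$ gives (1).

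\textbf{Main obstacle.} The delicate step is the lifting. We must verify that $\ell$ is definable enough that $j(\ell)$ agrees with $\ell$ up to $\theta$ in $M$. This uses the $C^{(2)}$-absoluteness remark: $\ell\cap V_\delta=\ell^{V_\delta}$. We must also verify that the $\mathbb{NR}$ stage at $\delta$ in $M$ does not kill the supercompactness witnessed by $j$. I would first check this by computing $\gamma$ at stage $\delta$ in $M$: there $\delta\notin\mathcal S^M$ but $\delta\in\mathcal C$, so $\gamma=\delta$, the stationary set has cofinality $\delta$, and the forcing is $\delta$-directed closed as needed.
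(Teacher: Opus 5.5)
The paper quotes this theorem from Kimchi--Magidor without proof, so I am comparing your sketch with the standard argument. Your Laver-style lifting for (2), and hence $\mathcal S\subseteq\mathcal S^{V[G]}$, is the usual proof and is essentially fine.

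Your proof of (3), however, has a genuine gap. Everything hinges on producing a stage $\beta\ge\kappa$ in $\mathrm{dom}(\ell)$ with $\sup(\beta\cap\mathcal C)<\kappa$. Your justification is that $\mathrm{dom}(\ell)$ has points ``arbitrarily close above $\kappa$'' because the closure points of $\ell$ form a club, and this is false. If some $\alpha\in\mathrm{dom}(\ell)\cap\kappa$ has $\ell(\alpha)>\kappa$, then $\kappa$ is not a closure point of $\ell$, and no closure point lies in $(\kappa,\ell(\alpha)]$. Your $\beta$ does exist when there is a supercompact above $\kappa$. But suppose there is none, and no inaccessible closure point of $\ell$ above $\kappa$. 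Then $\mathrm{dom}(\ell)\cap[\kappa,\infty)=\emptyset$ and your argument says nothing, even though $\kappa$ may be strongly compact in $V$. You note that $\kappa\in\mathrm{dom}(\ell)$ whenever $\ell''\kappa\subseteq\kappa$, but you never prove $\ell''\kappa\subseteq\kappa$.

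The missing idea is to use the stages \emph{below} $\kappa$.
\begin{enumerate}
\item For every $\alpha\in\mathrm{dom}(\ell)\cap\kappa$ we have $\gamma_\alpha^*\le\alpha<\kappa$. Either $\sup(\alpha\cap\mathcal C)<\alpha$, or $\alpha$ is a limit of supercompacts and $\gamma_\alpha^*=\alpha$.
\item So if $\ell(\alpha)\ge\kappa$, stage $\alpha$ adds a non-reflecting stationary subset of $\ell(\alpha)^+>\kappa$ concentrating on cofinality $<\kappa$.
\item This set survives the rest of the iteration. Every later nontrivial stage $\beta$ is a closure point of $\ell$, so $\beta>\ell(\alpha)^+$, and the iteration from $\beta$ on is $\beta$-strategically closed.
\item Solovay's theorem then contradicts strong compactness of $\kappa$ in $V[G]$. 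Hence $\ell''\kappa\subseteq\kappa$.
\item So $\kappa\in\mathrm{dom}(\ell)$ once $\kappa\in\mathcal I-\mathcal S$, and stage $\kappa$ itself does the killing. Here $\gamma^*_\kappa<\kappa$ exactly when $\kappa$ is not a limit of supercompacts.
\end{enumerate}

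There are three smaller points.
\begin{itemize}
\item ``The same argument'' does not give $\mathcal S^{V[G]}\subseteq\mathcal S$ at measurable limits of supercompacts. There $\gamma^*_\kappa=\kappa$, and non-reflecting stationary sets of cofinality-$\kappa$ points are compatible with supercompactness. Use instead Hamkins' theorem that gap forcing creates no new supercompacts, which you already invoke elsewhere.
\item In your ``main obstacle'' paragraph, $\gamma=\sup(\delta\cap\mathcal C^M)$ excludes $\delta$ itself. So $\gamma=\delta$ only when $\delta$ is a limit of supercompacts; for the least supercompact, $\gamma=\omega$. The stage-$\delta$ $\mathbb{NR}$ forcing in $M$ is harmless, but not because it is $\delta$-directed closed. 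It is harmless because it is $j(\ell)(\delta)^+$-strategically closed with $j(\ell)(\delta)>\theta$.
\item GCH is not a hypothesis here. So obtain the generics for the tail by forcing over the relevant intermediate extension, as in Laver's argument, rather than by counting dense sets.
\end{itemize}
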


\begin{Theorem}[Bagaria, Goldberg\cite{bagairagoldbergreflectingmeasures}]
\label{Theorem: extendible}
    A cardinal $\kappa$ is extendible iff for all $\lambda\in C^{(2)}-\kappa$,
    there exists a fine normal measure $U$ such that $T^2_{\kappa,\lambda}\in U$.
\end{Theorem}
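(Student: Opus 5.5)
Throughout I read $T^2_{\kappa,\lambda}$, as in Bagaria--Goldberg, as the set of $\sigma\in\mathcal P_\kappa(\lambda)$ with $\kappa_\sigma=\sigma\cap\kappa\in\kappa$ and $\mathrm{ot}(\sigma)\in C^{(1)}$, such that the inverse collapse $\pi_\sigma^{-1}:\mathrm{ot}(\sigma)\to\sigma$ extends to an elementary embedding $e_\sigma:V_{\mathrm{ot}(\sigma)}\to V_\lambda$ with critical point $\kappa_\sigma$ and $e_\sigma(\kappa_\sigma)=\kappa$. The plan is the Magidor-style proof of the characterization of supercompactness by small elementary embeddings, with $C^{(2)}$-correctness of $\lambda$ used to move $\Sigma_2$ facts between $V$, $V_\lambda$ and the ultrapower. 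The backward direction is the delicate one.

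\emph{Forward direction.} Suppose $\kappa$ is extendible and $\lambda\in C^{(2)}$ with $\lambda>\kappa$. First fix an elementary $j:V_{\lambda+1}\to V_{\mu+1}$ with $\mathrm{crit}(j)=\kappa$ and $j(\kappa)>\lambda$. By elementarity, since $\lambda\in C^{(1)}$ is witnessed inside $V_{\lambda+1}$ (as $V_\lambda\prec_{\Sigma_1}V$ is equivalent to $\lambda$ being an uncountable cardinal with $V_\lambda=H_\lambda$), we get $\mu\in C^{(1)}$. Define
\[
U=\{A\subseteq\mathcal P_\kappa(\lambda): j''\lambda\in j(A)\},
\]
which is a fine normal measure. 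It then suffices to check that $j''\lambda\in j(T^2_{\kappa,\lambda})$, that is:
\begin{enumerate}
\item $j''\lambda\cap j(\kappa)=\kappa$, which holds since $j(\kappa)>\lambda$;
\item $\mathrm{ot}(j''\lambda)=\lambda\in C^{(1)}$, which holds in $V_{\mu+1}$ because $C^{(1)}$-membership below $\mu$ is absolute;
\item the inverse collapse extends to $j\upharpoonright V_\lambda:V_\lambda\to V_{j(\lambda)}$, which is elementary with critical point $\kappa$ and sends $\kappa$ to $j(\kappa)$.
\end{enumerate}
All three are first-order facts about $V_{j(\lambda)+1}\subseteq V_{\mu+1}$. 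Hence $j''\lambda\in j(T^2_{\kappa,\lambda})$ and $T^2_{\kappa,\lambda}\in U$.

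\emph{Backward direction.} Given $U$ on $\mathcal P_\kappa(\lambda)$ with $T^2_{\kappa,\lambda}\in U$, let $j_U:V\to M$ be the ultrapower. By Łoś, $j''\lambda\in j(T^2_{\kappa,\lambda})$ in $M$. So $M$ has an elementary $e:V_\lambda\to V^M_{j(\lambda)}$ with critical point $\kappa$ and $e(\kappa)=j(\kappa)>\lambda$, and moreover $\lambda\in (C^{(1)})^M$; since $M^\lambda\subseteq M$, in fact $V^M_\lambda=V_\lambda$. By elementarity, $j(\lambda)\in(C^{(2)})^M$, so $V^M_{j(\lambda)}\prec_{\Sigma_2}M$.

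To get $\kappa$ to be $\gamma$-extendible for every $\gamma$, I run the argument over all $\lambda\in C^{(2)}$. Given $\gamma$, choose $\lambda\in C^{(2)}$ with $\gamma<\lambda$. The statement ``there is $\mu$ and an elementary $e:V_\gamma\to V_\mu$ with critical point $\kappa$ and $e(\kappa)>\gamma$'' is $\Sigma_2$ in the parameters $\gamma,\kappa$. Restricting $e$ to $V_\gamma$, I would verify that $M$ satisfies this statement, and then pull it back to $V$ via $j$ and $\lambda\in C^{(2)}$. Equivalently, $V_{j(\lambda)}^M$ satisfies it, so by elementarity of $j$ the set of $\sigma$ for which $V_\lambda$ satisfies the reflected version is $U$-large, and $\Sigma_2$-correctness of $\lambda$ then lifts this back to $V$.

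The main obstacle is exactly this transfer. The embedding $e$ lives in $M$ and its target $V^M_{j(\lambda)}$ need not equal $V_{j(\lambda)}$. So I must show that the existence of a $\gamma$-extendibility embedding is correctly computed, using $V^M_\lambda=V_\lambda$ and $\Sigma_2$-correctness on both sides. If that is awkward, my first fallback is to work directly with $\sigma\in T^2_{\kappa,\lambda}$, as in Magidor's proof. I would use that the embeddings $e_\sigma:V_{\mathrm{ot}\sigma}\to V_\lambda$ make $\kappa_\sigma$ extendible up to $\mathrm{ot}(\sigma)$ inside $V_\lambda$, on a $U$-large set. Normality together with $j$ then yields, by elementarity, that $\kappa$ is extendible in $V_{j(\lambda)}^M$ up to $\lambda$. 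Finally, $\lambda\in C^{(2)}$ would transfer ``$\kappa$ is $<\lambda$-extendible'' back to $V$.
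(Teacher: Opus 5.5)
\textbf{There is a genuine gap, and it comes from how you read $T^2_{\kappa,\lambda}$.} The paper does not prove this theorem; it cites it from Bagaria--Goldberg. It does, however, fix the definition when it introduces $n$-reflecting measures: $T^n_{\kappa,\lambda}=\{\sigma\in\mathcal P_\kappa(\lambda):\mathrm{ot}(\sigma)\in C^{(n)}\}$. So the superscript $2$ is exactly the requirement that $\mathrm{ot}(\sigma)$ be $\Sigma_2$-correct.

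Your Magidor-style set only asks for $\mathrm{ot}(\sigma)\in C^{(1)}$ plus an embedding $e_\sigma$. Such a set belongs to \emph{every} normal fine measure $U$ on $\mathcal P_\kappa(\lambda)$ once $\lambda\in C^{(1)}$. In $M=M_U$ we have $V^M_\lambda=V_\lambda$ and $\lambda\in(C^{(1)})^M$. Also $j\upharpoonright V_\lambda\in M$, because $|V_\lambda|=\lambda$ and ${}^\lambda M\subseteq M$, and $j\upharpoonright V_\lambda$ itself witnesses $j''\lambda\in j(T^2_{\kappa,\lambda})$. So under your reading the right-hand side just says $\kappa$ is supercompact, and the backward direction is false. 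The least supercompact cardinal is not extendible, since extendible cardinals are $\Sigma_3$-correct and hence limits of supercompacts.

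This is why both of your routes stall at the same point. Knowing $M\models$ ``$\kappa$ is $\gamma$-extendible'' (a $\Sigma_2$ fact about $M$) tells you nothing about $V$ unless $V^M_\lambda\prec_{\Sigma_2}M$. The hypothesis $\lambda\in C^{(2)}$ is about $V$, not $M$, and the targets $V^M_\beta$ of embeddings in $M$ need not be true $V_\beta$'s for large $\beta$. Pulling back along $j$ via $U$ only yields statements about the small cardinals $\kappa_\sigma$, not about $\kappa$. In the fallback, ``$\kappa$ is ${<}\lambda$-extendible in $V^M_{j(\lambda)}$'' transfers up to $M$, not down to $V$.

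\textbf{How the correct definition closes the gap.} Since $\mathrm{ot}(j''\lambda)=\lambda$, the condition $j''\lambda\in j(T^2_{\kappa,\lambda})$ says $\lambda\in(C^{(2)})^M$, that is, $V_\lambda=V^M_\lambda\prec_{\Sigma_2}M$. This is exactly the transfer you were missing. For $\gamma<\lambda$, $j\upharpoonright V_\gamma\in M$ witnesses in $M$ the $\Sigma_2$ statement ``there are $\beta$ and an elementary $e:V_\gamma\to V_\beta$ with critical point $\kappa$ and $e(\kappa)>\gamma$''. That statement therefore holds in $V_\lambda$, hence in $V$ because $\lambda\in C^{(2)}$. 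As $C^{(2)}$ is unbounded, $\kappa$ is extendible.

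Your forward direction is fine in outline, but item (2) must be strengthened to match. Since $\mathrm{ot}(\sigma)<\kappa<\lambda$, we can write $T^2_{\kappa,\lambda}=\{\sigma:\mathrm{ot}(\sigma)\in(C^{(2)})^{V_\lambda}\}$, which is definable over $V_{\lambda+1}$. So $j''\lambda\in j(T^2_{\kappa,\lambda})$ amounts to $V_\lambda\prec_{\Sigma_2}V_\mu$, where $\mu=j(\lambda)$. This does follow from $\lambda\in C^{(2)}$ together with your observation that $\mu\in C^{(1)}$. Absoluteness of $C^{(1)}$-membership alone does not give it.
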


\begin{Theorem}
\label{Theorem: KMT exntedible}
    It is consistent, modulo an extendible cardinal and a supercompact cardinal above it, that KMT is compatible with an extendible cardinal.
\end{Theorem}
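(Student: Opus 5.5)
We start from a model $V$ of $\mathrm{GCH}$ with an extendible cardinal $\kappa$ and a supercompact cardinal $\theta>\kappa$. We force with the standard forcing for KMT, $\mathbb P^\ell$; by Theorem \ref{Theorem :KMT}, $V[G]\models\mathrm{KMT}$, and every supercompact of $V$ stays supercompact, so $\theta$ survives. The remaining task is to show that $\kappa$ is still extendible in $V[G]$. By Theorem \ref{Theorem: extendible}, it suffices to produce, for every $\lambda\in (C^{(2)})^{V[G]}-\kappa$, a fine normal measure concentrating on $T^2_{\kappa,\lambda}$. Equivalently, we need an elementary embedding $j:V_\lambda^{V[G]}\to V_{\mu}^{V[G]}$ with critical point $\kappa$, $j(\kappa)>\lambda$ and $\mu\in C^{(2)}$.

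The plan has three steps, in order.

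\textbf{Step 1: $C^{(2)}$ is preserved.} Since $\mathbb P^\ell$ is a definable Easton-support iteration, the tail beyond a closure point is highly closed. Moreover, $\ell$ is $\Sigma_2$-definable with $\ell\cap V_\delta=\ell^{V_\delta}$ for $\delta\in C^{(2)}$. From this we check that for $\lambda\in C^{(2)}$ that are closure points of $\ell$, $\mathbb P^\ell_\lambda$ is the iteration as computed in $V_\lambda$, and $V_\lambda[G_\lambda]=V[G]_\lambda\prec_{\Sigma_2}V[G]$. Conversely, every $\lambda\in (C^{(2)})^{V[G]}$ is in $C^{(2)}$ of $V$, at least on a club. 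So it suffices to handle $\lambda\in C^{(2)}$ of $V$ with $\lambda>\kappa$.

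\textbf{Step 2: choose the ground embedding and lift it.} In $V$, take an extendibility embedding $j:V_\lambda\to V_\mu$ with $\mathrm{crit}(j)=\kappa$, $j(\kappa)>\lambda$ and $\mu\in C^{(2)}$. Using that $\ell$ is $\Sigma_2$-definable and $\lambda,\mu\in C^{(2)}$, we get $j(\mathbb P^\ell_\lambda)=\mathbb P^\ell_\mu$ factored as $\mathbb P^\ell_\kappa*\dot{\mathbb Q}^\ell_\kappa*\mathbb P^\ell_{\kappa,\mu}$.

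The key bookkeeping is at stage $\kappa$. Here $\kappa\in\mathcal S$, so $\ell$ is undefined at $\kappa$ and $\dot{\mathbb Q}^\ell_\kappa$ is trivial. That gives $j(\mathbb P_\kappa)=\mathbb P_\kappa*\mathbb P_{\kappa,j(\kappa)}$, and $\mathbb P_\kappa*\mathbb P_{\kappa,\lambda}=\mathbb P_\lambda$ is already forced by $G$.

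Lifting $j$ requires a generic for $\mathbb P_{\lambda,\mu}$ together with a master condition. Since $V_\mu$ is not closed under sequences, we cannot simply build the generic inside; instead we use that $\mathbb P_{\lambda,\mu}$ is forced over $V[G_\lambda]$, so the generic should come from $G$ itself. Concretely, we take $G_\mu$ from the actual generic $G$. Then $j''G_\lambda\subseteq G_\mu$ would require a master condition, which in the Easton iteration is $\bigcup j''(G_{\kappa,\lambda})$. This is not generally available.

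\textbf{Step 3: the main obstacle, compatibility with non-reflecting stationary sets.} The key difficulty is that lifting directly is problematic. The alternative approach is to use the Laver-style indestructibility structure of $\mathbb P^\ell$, where the component $\dot{\mathbb Q}^\ell_{\beta,0}$ includes all $\beta$-directed closed posets in $V_{\ell(\beta)}$.

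We would instead take $\lambda$ of cofinality $>\kappa$ (enough by closure of $C^{(2)}$ limits) and argue as follows.
\begin{enumerate}[(a)]
\item In $V$, pick $j:V\to M$ witnessing $\lambda$-supercompactness of $\kappa$ with $j(\ell\upharpoonright\kappa)(\kappa)>\lambda$ and $V_\lambda\in M$. This is possible by the Menas property of $\ell\upharpoonright\kappa$ combined with extendibility: the embedding is derived from an extendibility embedding.
\item Lift $j$ through $\mathbb P_\kappa$ in the standard Laver way, with $\mathbb P_{\kappa,j(\kappa)}$ having first nontrivial stage above $\lambda$, via a master condition.
\item Verify that the stages $\mathbb{NR}(\ell(\beta)^+,\gamma^*)$ for $\beta\in(\kappa,\lambda)$ are absorbed. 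Because $\gamma=\sup(\beta\cap\mathcal C)\ge\kappa$, these stationary sets consist of points of cofinality $\ge\kappa$ and do not obstruct the master condition. This is exactly where having $\kappa\in\mathcal C$ is used.
\item Finally, transfer back to a $V_\lambda\to V_{\mu}$ embedding with $\mu\in C^{(2)}$, using the $\Sigma_2$-correctness from Step 1 and the fine normal measure characterization of Theorem \ref{Theorem: extendible}, and check that $T^2_{\kappa,\lambda}$ belongs to the derived measure.
\end{enumerate}

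The supercompact $\theta$ above $\kappa$ is used to ensure that $\mathcal C$ has elements above $\kappa$, so that cofinalities of the added stationary sets are large relative to $\kappa$. I expect the main obstacle to be verifying $C^{(2)}$-correctness of the target $\mu$ in $V[G]$ after lifting, i.e.\ that the $\mathbb{NR}$ stages between $\lambda$ and $\mu$ do not destroy $\Sigma_2$-elementarity.
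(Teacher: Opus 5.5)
There is a genuine gap, and it is the point you flag yourself. You aim to show that $\kappa$ stays extendible in $V[G]$, which rests on Step 1: that $\mathbb P^\ell$ preserves $\Sigma_2$-correctness, both in $V[G]$ and, by elementarity, in the lifted target model. You need the latter because $T^2_{\kappa,\lambda}\in W$ just says $\lambda\in C^{(2)}$ of the target. You give no argument, and it is not routine. Whether $\ell\upharpoonright\alpha=f$ holds depends on $\Pi_2$ clauses ``$\xi\in\mathcal S$'', and $\ell$ is computed correctly only inside $V_\delta$ for $\delta\in C^{(2)}$. So the natural translation of a $\Sigma_2$ fact of $V[G]$ back to $V$ is $\Sigma_3$, whereas extendibility only gives you $\lambda\in C^{(2)}$ in the ultrapower. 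The paper never claims $\kappa$ is extendible in $V[G]$; its model is $V[G]_\theta$. That is the real role of $\theta$. It is not there to put points of $\mathcal C$ above $\kappa$: $\kappa$ is itself supercompact, so $\kappa\in\mathcal C$ already. By item (1) of the Kimchi--Magidor theorem, supercompactness is preserved. So $\theta$ is supercompact, hence $\Sigma_2$-correct, in $V[G]$ and, applying the same theorem inside the ultrapower, in the lifted target. This replaces any general $C^{(2)}$-preservation result.

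Your Step 3 cannot work as stated. Requiring $j(\ell\upharpoonright\kappa)(\kappa)>\lambda$ forces $M\models\kappa\notin\mathcal S$. But $V^M_\lambda=V_\lambda$ thinks $\kappa$ is supercompact, so $\lambda\notin(C^{(2)})^M$. Since KMT forcing adds no supercompacts, $\kappa$ is still non-supercompact in $M[j(G)]$. Yet $V^{M[j(G)]}_\lambda=V[G]_\lambda$ must think $\kappa$ supercompact if $\kappa$ is to be extendible in $V[G]$. So the derived measure never contains $T^2_{\kappa,\lambda}$: the Menas/lottery route is incompatible with $2$-reflection. It also contradicts your correct observation in Step 2 that stage $\kappa$ is trivial.

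The route that works is the one you dropped in Step 2, run with an ultrapower instead of $j:V_\lambda\to V_\mu$. Take $\eta\in C^{(2)}$ above $\theta$ and a $2$-reflecting normal measure $U$ on $\mathcal P_\kappa(\eta)$. Then $j_U(\ell)\upharpoonright\eta=\ell\upharpoonright\eta$, so $j_U(\mathbb P^\ell_\kappa)=\mathbb P^\ell_\eta*P_{tail}$ and $G_\eta$ itself handles the first $\eta$ stages. Since $j_U(\mathbb P^\ell_{\kappa,\eta})$ is $\eta^+$-directed closed and $M_U^\eta\subseteq M_U$, a master condition exists. Your master-condition obstruction came only from insisting the target generic be $G$. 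Instead, force $H$ over $V[G_\eta]$ for $P_{tail}*j_U(\mathbb P^\ell_{\kappa,\eta})$, lift, and use closure to see that the induced measure $W$ on $\mathcal P_\kappa(\theta)$ lies in $V[G_\theta]$. As $\eta\in(C^{(2)})^{M_U}$, $\theta$ is supercompact in $M_U$ and stays so after $j_U(\mathbb P^\ell_\eta)$. Hence $\theta\in C^{(2)}$ of the target and $W$ is $2$-reflecting. Because the target's $V_\theta$ is $V[G]_\theta$, projections of $W$ are $2$-reflecting at every $\lambda\in C^{(2)}$ of $V[G]_\theta$. So $\kappa$ is extendible in $V[G]_\theta$, which satisfies KMT since $\theta$ is $\Sigma_2$-correct in $V[G]$.
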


\begin{proof}[Proof]
    Let $\kappa$ be an extendible cardinal and $\theta>\kappa$ be supercompact.
    Let $\mathbb P^\ell$ be the standard forcing for KMT as defined above, and let $G$ be $(V,\mathbb{P}^\ell)$-generic.
    Then by \autoref{Theorem :KMT}, $\theta$ remains supercompact in $V[G]$.
    We show below that $\kappa$ is $\theta$-extendible in $V[G]$, then $V[G]_\theta$ is a model where “there exists an extendible cardinal” and KMT holds.

    Now pick any $\eta\in C^{(2)}-(\theta+1)$. Since $\mathbb P_{\eta,\infty}$ is $\eta$-strategically closed, it suffices to show that $\kappa$ is $\theta$-extendible in $V[G_\eta]$.
    By \autoref{Theorem: extendible},
    Let $U$ be a $2$-reflecting fine normal measure on $\mathcal P_\kappa(\eta)$, and let $j_U:V\to M_U$ be the ultrapower induced by $U$.
    Since $M_U\models \eta\in C^{(2)}$ and $V_\eta\models$ "$\theta$ is supercompact", then $\theta$ is supercompact in $M_U$.
    Because $\eta\in (C^{(2)})^{M_U}$, we have $j_U(\ell)\upharpoonright \eta=\ell\upharpoonright \eta$, and hence
    $j_U(\mathbb P^\ell_\kappa*\mathbb P^{\ell}_{\kappa,\eta})=\mathbb P^\ell_\eta* P_{tail}*j_U(\mathbb P^\ell_{\kappa,\eta})$,
    where $P_{tail}=j_U(\mathbb P^\ell_\kappa)_{\eta,j_U(\kappa)}$.
    Since $\mathbb P^\ell_{\kappa,\eta}$ is $\kappa$-directed closed and $M_U^\eta\subseteq M_U$, $j_U(\mathbb P^\ell_{\kappa,\eta})$ is $\eta^+$-directed closed.
    As $|\mathbb P^\ell_\eta|\le \eta$, there exists a master condition $q^*\in j_U(\mathbb P^\ell_{\kappa,\eta})$.
    Let $H$ be $(V[G_\eta],P_{tail}*j_U(\mathbb P^\ell_{\kappa,\eta}))$-generic with $q^*\in H$, and lift $j_U$ to $j_U^*:V[G_\eta]\to M_U[G_\eta][H]$.
    Let $W$ be the fine normal measure on $\mathcal P_{\kappa}^{V[G_\eta]}(\theta)$ induced by $j_U^*$.
    By the definition of $\mathbb P^\ell$, $\dot{\mathbb Q}^\ell_\theta$ is trivial and $\mathbb P^\ell_{\theta,\eta}$ is $|2^{{\theta}^{<\kappa}}|^+$-strategically closed, so $W\in V[G_\theta]$.
    Moreover, $j_U(\mathbb P^\ell_\eta)$ preserves the supercompactness of $\theta$, so $\theta$ is supercompact in $M_U[G_\eta][H]$, and consequently $\theta\in (C^{(2)})^{M_U[G_\eta][H]}$.
    This shows that $W$ is a $2$-reflecting fine normal measure on $\mathcal P^{V[G_\theta]}_\kappa(\theta)=\mathcal P^{V[G_\eta]}_\kappa(\theta)$.
    Hence $\kappa$ is extendible in $V[G_\theta]_\theta=V[G]_\theta$, completing the proof.
\end{proof}

\section{Proof of $n=2$}
This section shows the main result of the paper:
the least tightly \(C^{(2)}\)-compact cardinal may not be \(\Sigma_3\)-correct.

\begin{Definition}[Club shooting]
    If $\xi$ is a Mahlo cardinal, let $S_\xi\subseteq\xi$ be the set of inaccessible cardinals below $\xi$.
    Define $\mathbb C_\xi$ as the set of bounded closed subsets of $\xi$ that are disjoint from $S_\xi$, ordered by end-extension.
    Clearly $\mathbb C_\xi$ shoots a club disjoint from $S_\xi$.
\end{Definition}

\begin{Lemma}
\label{Lemma: club shooting}
    $\mathbb C_\xi$ is $\xi^+.c.c$, and for every $\chi<\xi$, $\mathbb C_\xi$ contains a dense set $\mathbb D_{\chi,\xi}$ such that $\mathbb D_{\chi,\xi}$ is $\chi$-directed closed.
\end{Lemma}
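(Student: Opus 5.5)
Two things need to be shown: the $\xi^+$-c.c., and for each $\chi<\xi$ a dense subset $\mathbb D_{\chi,\xi}$ of $\mathbb C_\xi$ that is $\chi$-directed closed. Throughout, $\xi$ is Mahlo, and in particular inaccessible.

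\emph{Chain condition.} I will use a counting argument. Every condition is a bounded subset of $\xi$, so it lies in $\mathcal P(\alpha)$ for some $\alpha<\xi$. Since $\xi$ is inaccessible, $|\mathcal P(\alpha)|<\xi$ for each such $\alpha$, and hence $|\mathbb C_\xi|\le\xi$. Any antichain therefore has size at most $\xi$, which gives the $\xi^+$-c.c. with no further work.

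\emph{Dense directed closed subset.} Fix $\chi<\xi$ and take $\mathbb D_{\chi,\xi}$ to be the set of conditions $p\in\mathbb C_\xi$ with $\max p\ge\chi$. Conditions are closed and bounded, so $\max p$ exists.

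Density is easy. Given $p$, let $\gamma=\max(\chi,\sup p)+1$. This is a successor ordinal, so it is not inaccessible, and $\gamma\notin S_\xi$. Then $p\cup\{\gamma\}$ is closed, bounded, disjoint from $S_\xi$, and end-extends $p$.

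For closure, let $D\subseteq\mathbb D_{\chi,\xi}$ be a directed set with $|D|<\chi$. Because the order is end-extension, directedness makes any two members of $D$ comparable. So $D$ is a chain of initial segments of one another, and $q=\bigcup D$ is well defined and end-extends every element of $D$. Put $\mu=\sup q$. Then $q\cup\{\mu\}$ is closed and bounded below $\xi$, using regularity of $\xi$ and $|D|<\chi<\xi$. It end-extends each element of $D$ and has maximum at least $\chi$, so it is the required lower bound, provided it is still disjoint from $S_\xi$.

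\emph{The main obstacle} is checking that $\mu\notin S_\xi$. If some element of $D$ has maximum $\mu$, then $\mu\in q$ already and there is nothing to check. Otherwise $\mu$ is a limit of the maxima of elements of $D$. Choosing a cofinal increasing sequence of these maxima, its length is at most $|D|<\chi$, so $\mathrm{cf}(\mu)<\chi$. On the other hand, every element of $D$ has maximum $\ge\chi$, so $\mu\ge\chi>\mathrm{cf}(\mu)$. Thus $\mu$ is singular, hence not inaccessible, and $\mu\notin S_\xi$. This is exactly why the conditions in $\mathbb D_{\chi,\xi}$ are required to reach past $\chi$. It completes the proof that $\mathbb D_{\chi,\xi}$ is $\chi$-directed closed.
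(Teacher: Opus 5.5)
Your proof is correct and follows the paper's argument: the $\xi^+$-c.c.\ comes from $|\mathbb C_\xi|\le\xi$, and you use the same dense set $\{p\in\mathbb C_\xi:\max p\ge\chi\}$. You also supply the details the paper dismisses as ``clearly'', most importantly that a new top point $\mu$ satisfies $\mathrm{cf}(\mu)\le|D|<\chi\le\mu$, so it is singular and hence lies outside $S_\xi$.
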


\begin{proof}[Proof]
    Since $|\mathbb C_\xi|=\xi$, $\mathbb C_\xi$ is $\xi^+.c.c$.
    Define $\mathbb D_{\chi,\xi}=\{p\in \mathbb C_{\chi,\xi}:\max p\ge \chi\}$.
    Clearly $\mathbb D_{\chi,\xi}$ is dense in $\mathbb C_\xi$ and is $\chi$-directed closed.
\end{proof}

\begin{Definition}[Bagaria, Goldberg\cite{bagairagoldbergreflectingmeasures}]
    Let $\kappa\le \lambda$. We say $U$ is an $n$-reflecting measure on $\mathcal P_\kappa(\lambda)$ iff $U$ is $\kappa$-complete and $T^n_{\kappa,\lambda}\in U$, where $T^n_{\kappa,\lambda}=\{\sigma\in \mathcal P_\kappa(\lambda):\mathrm{ot}(\sigma)\in C^{(n)}\}$.
\end{Definition}

\begin{proof}[Proof of \autoref{Theorem: tightly C(2) may not}]
    By \autoref{Theorem: KMT exntedible}, we may assume that the ground model $V$ satisfies KMT and that $\kappa$ is extendible.
    Further assume that there is no supercompact cardinal above $\kappa$; then by KMT there is no strongly compact cardinal above $\kappa$.
    Let $\mathbb T$ be the canonical forcing for $\mathrm{GCH}$; $\mathbb T$ is a gap forcing.
    By [Theorem 5.2, \cite{tsaprounis2018c}], $\mathbb T$ preserves all $C^{(n)}$-extendible cardinals.
    Standard arguments show that $\mathbb T$ preserves all supercompact and measurable cardinals, and hence all strongly compact cardinals.
    Since $\mathbb T$ is a gap forcing, it does not force any new large cardinals, so $\mathbb T$ preserves KMT.
    We may still denote by $V$ the model obtained by forcing with $\mathbb T$, then $V\models\mathrm{GCH}\wedge \mathrm{KMT}$.

    By [Lemma 3.5, \cite{GoldbergoptimalityHODdichotomy}] and [Theorem 3.8, \cite{GoldbergUsuba}], there exists a class forcing $\mathbb Q$ with the following properties:
    \begin{enumerate}
        \item $\mathbb Q$ preserves extendible cardinals;
        \item For every $\lambda\in C^{(2)}$, $\mathbb Q=\mathbb Q_\lambda*\mathbb Q_{\lambda,\infty}$ where $\mathbb Q_{\lambda,\infty}$ is $\lambda^+$-directed closed;
        \item $\lambda$ is $\Sigma_2$-correct in $V$ iff $\lambda$ is $\Sigma_2$-indestructibly correct under $\lambda^+$-directed closed posets that preserve $\mathrm{GCH}$ in $V^\mathbb Q$;
        \item If $V\models \mathrm{GCH}$, then $V^\mathbb Q\models \mathrm{GCH}$.
    \end{enumerate}
    Note that our condition (2) differs slightly from that in \cite{GoldbergUsuba} and \cite{GoldbergoptimalityHODdichotomy}: we require $\mathbb Q_{\lambda,\infty}$ to be $\lambda^+$-directed closed rather than $\lambda$-directed, a minor adjustment for technical issue.
    We may assume that $\mathbb Q$ admits a gap at $\omega$.

    Let $V_1=V^\mathbb Q$.
    Now let $\mathcal K$ be the set of $\xi<\kappa$ such that $\xi$ is strongly compact in $V$ and Mahlo in $V_1$.
    Define the Easton support product
    $$\mathbb P=\prod^{\mathrm{Easton}}_{\xi\in \mathcal K}\mathbb C_\xi$$
    and let $V_2=V_1^\mathbb P$.
    Let $H$ be $\mathbb Q$-generic over $V$ and $G$ be $\mathbb P$-generic over $V_1=V[H]$.

\begin{Claim}
    $\mathbb P$ preserves $\mathrm{GCH}$ and cardinals.
\end{Claim}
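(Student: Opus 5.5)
The plan is the standard Easton-product analysis, working in $V_1$, where $\mathrm{GCH}$ holds by property (4) of $\mathbb Q$ together with $V\models\mathrm{GCH}$. Every $\xi\in\mathcal K$ is Mahlo, hence inaccessible, in $V_1$. By \autoref{Lemma: club shooting}, each factor $\mathbb C_\xi$ has size $\xi$, is $\xi^+$-c.c., and for every $\chi<\xi$ has a $\chi$-directed closed dense subset. Since $\mathcal K\subseteq\kappa$, the whole product has size at most $\kappa$ (Easton supports, $\kappa$ inaccessible). It therefore preserves cardinals and $\mathrm{GCH}$ above $\kappa$ trivially, and I only need to examine cardinals $\mu\le\kappa^+$.

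\textbf{Factorization.} For a regular cardinal $\mu\le\kappa$, I split
\[
\mathbb P=\mathbb P_{<\mu}\times\mathbb P_{\ge\mu},\qquad \mathbb P_{<\mu}=\prod_{\xi\in\mathcal K\cap\mu}\mathbb C_\xi,\quad \mathbb P_{\ge\mu}=\prod_{\xi\in\mathcal K\setminus\mu}\mathbb C_\xi .
\]
\begin{itemize}
\item Let $\chi$ be the least element of $\mathcal K\setminus\mu$. Then $\mathbb P_{\ge\mu}$ has a dense subset $\prod_{\xi\ge\mu}\mathbb D_{\mu,\xi}$, computed with Easton support. This dense subset is $\mu$-directed closed: Easton support is preserved under unions of fewer than $\mu$ supports, since each point of $\mathcal K\setminus\mu$ is inaccessible and exceeds $\mu$. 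In fact it is $\chi$-directed closed away from the first coordinate, but $\mu$-closure suffices.
\item If $\mu$ is not Mahlo, or more generally whenever $\mathcal K\cap\mu$ is bounded in $\mu$, then $\mathbb P_{<\mu}$ has size $<\mu$ by $\mathrm{GCH}$ and inaccessibility of its indices. If $\mu$ is a limit of $\mathcal K$, then $|\mathbb P_{<\mu}|\le\mu$. Using Easton support and the fact that $\mu$ is Mahlo when it is a regular limit of inaccessibles, the standard $\Delta$-system argument shows $\mathbb P_{<\mu}$ is $\mu$-c.c. when $\mu$ is Mahlo, and otherwise the supports are bounded below $\mu$.
\end{itemize}

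\textbf{Preservation of $\mu$.} By Easton's lemma, $\mathbb P_{\ge\mu}$ remains $\mu$-distributive in $V_1^{\mathbb P_{<\mu}}$, and $\mathbb P_{<\mu}$ remains $\mu$-c.c. (respectively of size $<\mu$) in $V_1^{\mathbb P_{\ge\mu}}$. Thus $\mu$ stays regular; successor cardinals $\nu^+$ are handled by taking $\mu=\nu^+$. Hence cardinals and cofinalities are preserved.

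\textbf{$\mathrm{GCH}$.} For regular $\mu$, every subset of $\mu$ in $V_2$ lies in $V_1[G_{<\mu^+}]$, since the upper part adds no new $\mu$-sequences. Counting nice names for the $\mu^+$-c.c. forcing $\mathbb P_{<\mu^+}$, which has size $\le\mu^+$ under $\mathrm{GCH}$, gives $2^\mu\le(\mu^{+})^{\mu}=\mu^+$. Singular cardinals then follow from the regular case.

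\textbf{Main obstacle.} The delicate point is the chain condition of $\mathbb P_{<\mu}$ at Mahlo $\mu$ that are limits of $\mathcal K$, including $\mu=\kappa$. The factors below $\mu$ are not small, so a uniform bound requires Easton support on a stationary set of inaccessibles. I would handle this with the usual Fodor/$\Delta$-system proof of the $\mu$-c.c. for Easton products at Mahlo $\mu$, and then check that Easton's lemma applies to the factorization.
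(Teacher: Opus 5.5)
Your GCH computation at a regular $\mu$ is fine. The cardinal-preservation step is not: there you split at $\mu$ itself as $\mathbb P_{<\mu}\times\mathbb P_{\ge\mu}$ and invoke Easton's lemma, and the hypotheses you need are false for several kinds of $\mu$.

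First, if $\mu\in\mathcal K$, then $\mathbb C_\mu$ is a factor of $\mathbb P_{\ge\mu}$, and $\mathbb D_{\mu,\mu}=\{p\in\mathbb C_\mu:\max p\ge\mu\}$ is empty; \autoref{Lemma: club shooting} only gives $\mathbb D_{\chi,\xi}$ for $\chi<\xi$. In fact $\mathbb C_\mu$ has no ${<}\mu$-closed dense subset $D$. Since $\mu$ is Mahlo, some inaccessible $\nu<\mu$ is a closure point of ``extend into $D$''. One could then build a descending $\nu$-sequence in $D$ with maxima cofinal in $\nu$, and any lower bound would contain $\nu\in S_\mu$.

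Second, a regular limit of inaccessibles need not be Mahlo. The least inaccessible limit point $\mu$ of $\mathcal K$ is not, because the limit points of $\mathcal K\cap\mu$ form a club of singular cardinals. At such $\mu$, $\mathbb P_{<\mu}$ has size $\mu$ and is not $\mu$-c.c. To see this, enumerate that club as $\langle c_\alpha:\alpha<\mu\rangle$, pick $\xi_\alpha\in\mathcal K\cap(c_\alpha,c_{\alpha+1})$ and incompatible $a_\xi,b_\xi\in\mathbb C_\xi$. Let $p_\beta$ have support $\{\xi_\alpha:\alpha\le\beta\}$, with $p_\beta(\xi_\alpha)=a_{\xi_\alpha}$ for $\alpha<\beta$ and $p_\beta(\xi_\beta)=b_{\xi_\beta}$. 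These conditions meet the Easton requirement and are pairwise incompatible.

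Third, let $\beta$ be a singular limit of $\mathcal K$ and $\mu=\beta^+$. Then $\mathcal K\cap\mu$ is bounded, yet $|\mathbb P_{<\mu}|=\beta^{\mathrm{cf}\beta}=\mu$, not $<\mu$ as you claim. Moreover, a product of antichains of the $\mathbb C_{\xi_i}$ along a cofinal sequence $\langle \xi_i:i<\mathrm{cf}\beta\rangle$ is an antichain of size $\beta^+$. So the $\Delta$-system step you flag as the main obstacle is not the real issue: the split at $\mu$ simply has no usable chain condition at these points.

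The missing idea is to split at the potential new cofinality $\delta$ rather than at $\mu$, with $\mathbb C_\delta$ in the lower factor. This is what the paper does with $\mathbb P^{\le\alpha}\times\mathbb P^{>\alpha}$.
\begin{itemize}
\item For regular $\delta$ one always has $|\mathbb P^{\le\delta}|\le\delta$. This follows from GCH, Easton support at inaccessible limits of $\mathcal K$, and $|\mathbb C_\delta|=\delta$.
\item $\mathbb P^{>\delta}$ has the $\delta^+$-directed closed dense subset $\prod_{\xi>\delta}\mathbb D_{\delta^+,\xi}$. This is legitimate because every $\xi\in\mathcal K$ above $\delta$ is inaccessible, so $\delta^+<\xi$.
\item By Easton's lemma, every $\delta$-sequence of ordinals in $V_2$ lies in $V_1^{\mathbb P^{\le\delta}}$, an extension by a poset of size $\le\delta$. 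Hence no regular $\mu>\delta$ can acquire cofinality $\delta$, with no chain condition at $\mu$ needed, and $2^\delta=\delta^+$ by counting names.
\end{itemize}
Your GCH paragraph already uses exactly this split (at $\mu^+$), so you only need to use it for preservation as well.

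Finally, GCH at singular cardinals does not follow from GCH at regular cardinals in $\mathsf{ZFC}$. Argue as the paper does: $\delta$ stays a strong limit, so $2^\delta=\delta^{\mathrm{cf}\delta}$. Every $\mathrm{cf}(\delta)$-sequence lies in the small extension $V_1^{\mathbb P^{\le\mathrm{cf}\delta}}$, where $\delta^{\mathrm{cf}\delta}=\delta^+$.
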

\begin{proof}[Proof of Claim]
    For any $\alpha\in \mathsf{Card}$, write $\mathbb P=\mathbb P^{\le \alpha}\times \mathbb P^{>\alpha}$.
    Since $\mathbb P^{>\alpha}$ has a dense subset that is $\alpha^+$-closed, it suffices to show that $V_1^{\mathbb P^{\le \alpha}}\models 2^\alpha=\alpha^+$.
    We consider two cases:
    If $\alpha$ is not a limit point of $\mathcal K$, then $V_1\models\mathrm{GCH}$ and $|\mathbb P^{\le \alpha}|\le \alpha$.
    If $\alpha$ is a limit of $\mathcal K$, then $\alpha$ is a strong limit cardinal in $V_1$.
    Pick any $\xi\in \mathcal K\cap \alpha$. If $\xi$ is a successor element of $\mathcal K$, then $\mathbb P^{\le \alpha}=\mathbb P^{<\xi}\times \mathbb P^{[\xi,\alpha]}$, where $|\mathbb P^{<\xi}|<\xi$ (as $\xi$ is Mahlo in $V_1$) and $\mathbb P^{[\xi,\alpha]}$ has a dense subset that is $|\mathbb P^{<\xi}|^+$-directed closed.
    By the above analysis, forcing with $\mathbb P^{\le \alpha}$ preserves “$\alpha$ is a strong limit cardinal”, so $V_1^{\mathbb P^{\le\alpha}}\models 2^\alpha=\alpha^{\mathrm{cf}(\alpha)}$.
    If $\alpha$ is inaccessible, then because $\mathbb P$ is Easton support, $|\mathbb P^{\le\alpha}|=\alpha$, so $V_1^{\mathbb P^{\le\alpha}}\models2^\alpha=\alpha^+$.
    If $\alpha$ is singular, then $\mathbb P^{>\mathrm{cf}(\alpha)}$ does not force any sequence of length $\mathrm{cf}(\alpha)$, so $V_1^{\mathbb P^{\le\alpha}}\models 2^\alpha=\alpha^{\mathrm{cf}(\alpha)}=\alpha^+$.
    \renewcommand{\qedsymbol}{$\dashv$}
\end{proof}

    By \autoref{Lemma: club shooting}, $\mathbb P$ contains an $\omega_1$-directed closed dense subset, so we may regard $\mathbb Q*\mathbb P$ as admitting a gap at $\omega$.
    We now show that there is no strongly compact cardinal below $\kappa$ in $V_2$:
    Otherwise, let $\theta<\kappa$ be strongly compact in $V_2$. If $\theta$ is not Mahlo in $V_1$, then clearly it is not Mahlo in $V_2$.
    If $\theta$ is Mahlo in $V_1$, then because $\mathbb Q*\mathbb P$ admits a gap at $\omega$, $\theta$ is strongly compact in $V$. But then $\mathbb P$ forces that $\theta$ is not Mahlo in $V_2$, contradicting the assumption.
    Hence in $V_2$ there is no strongly compact cardinal below $\kappa$.

    Finally, we prove that $\kappa$ remains tightly $C^{(2)}$-compact in $V_2$.
    First recall that $(C^{(2)})^{V}=(C^{(2)})^{V_1}$, so we simply denote by $C^{(2)}$ the $\Sigma_2$-correct cardinals in $V$ and $V_1$.
    Fix any $\lambda\in C^{(2)}-\kappa$. Choose $\mu\in C^{(2)}-(\lambda+1)$ such that $\kappa<\mathrm{cf}(\mu)<\mu$.
    By \autoref{Theorem: extendible},
    Let $D$ be a $2$-reflecting fine normal measure on $\mathcal P^{V_1}_\kappa(\mu)$, and let $j_1:V_1\to M_1$ be the ultrapower induced by $D$.
    Then $M_1\models\kappa, \lambda,\mu\in C^{(2)}$.
    Because $\mathbb Q*\mathbb P$ admits a gap at $\omega$, the restriction $j=j_1\upharpoonright V:V\to M$ is definable in $V$ and $j_1$ is a lift of $j$.
    Since $(C^{(2)})^{V}=(C^{(2)})^{V_1}$, we have $(C^{(2)})^{M}=(C^{(2)})^{M_1}$, and therefore $M\models \kappa,\mu,\lambda\in C^{(2)}$.
    Because there is no strongly compact cardinal above $\kappa$ in $V$, we have $V_\mu\models$“there is no strongly compact cardinal in $(\kappa,\infty)$”, and consequently $V_\mu^{V_1}\models$“there is no $V$-strongly compact cardinal in $(\kappa,\infty)$”.
    Moreover, $V_\mu^{V_1}=V_\mu^{M_1}\preceq_2 M_1$ and $V_\mu=V_\mu^M$, so $M_1\models$“there is no $M$-strongly compact cardinal in $(\kappa,\mu)$”.

    Since $M^\mu\subseteq M$, there exists a normal measure $U\in M$ on $\kappa$ of Mitchell order $0$ (i.e., $U$ is a normal measure and $\kappa$ is not measurable in $\mathrm{Ult}(M,U)$).
    Let $i:M\to \mathrm{Ult}(M,U)=N$ be the ultrapower induced by $U$.
    Because $M\models \kappa\in C^{(2)}$ and $\mathrm{cf}^M(\kappa)>\omega$, we have $M\models \kappa\in \lim(C^{(2)})$.
    As $\mathrm{crt}(i)=\kappa$, it follows that $N\models \kappa\in \lim (C^{(2)})$, hence $N\models \kappa\in C^{(2)}$.
    Write $j(\mathbb Q)=\mathbb Q_\kappa*j(\mathbb Q)_{\kappa,\infty}$. Since $M\models\mathrm{GCH}$ and $\mathbb Q_{\kappa}\subseteq M_\kappa=V_\kappa$, in $N$ there are only $|i(2^\kappa)|=|i(\kappa^+)|=\kappa^+$ many dense subsets of $i(\mathbb Q_\kappa)$.
    By the definition of $\mathbb Q_\kappa$ and $\kappa\in (C^{(2)})^N$, $i(\mathbb Q_\kappa)_{\kappa,i(\kappa)}$ is $\kappa^+$-directed closed, so a standard argument allows us to lift $i$ to $M[H_\kappa]$.
    Since $M\models\kappa\in C^{(2)}$, the definition of $j(\mathbb Q)$ implies that $j(\mathbb Q)_{\kappa,\infty}$ is $\kappa^+$-directed closed, hence $i$ can be lifted further to $i_1:M_1\to N[i_1(j(H))]$.
    Let $N_1=N[i_1(j(H))]$. It is easy to see that $N_1\models$“$\kappa$ is not $N$-measurable, and there is no $N$-strongly compact cardinal in $(i_1(\kappa),i_1(\mu))$”.
    Moreover, in $N_1$, every $\Sigma_2$-correct cardinal $\eta$ is indestructible under $\eta^+$-directed closed posets that preserve $\mathrm{GCH}$.

    Now set $k=i_1\circ j_1:V_1\to N_1$. Since $i_1$ is close to $M_1$, by \autoref{Lemma: closed} we have
    $$t_k(\lambda)=(t_{i_1}\circ t_{j_1})(\lambda)=t_{i_1}(t_{j_1}(\lambda))=t_{i_1}(\lambda).$$
    We show that $t_{i_1}(\lambda)\in (C^{(2)})^{N_1}$:
    If $\mathrm{cf}(\lambda)<\kappa$, then by \autoref{Lemma: 3}, $t_{i_1}(\lambda)=i_1(\lambda)\in (C^{(2)})^{N_1}$.
    If $\mathrm{cf}(\lambda)=\kappa$, then $\lambda\in \lim(C^{(2)})$, so $M_1\models\lambda\in \lim(C^{(2)})$.
    Let $\delta\in \lambda\cap C^{(2)}$ be a successor element of $C^{(2)}$. Then $\mathrm{cf}^{M_1}(\delta)=\mathrm{cf}(\delta)=\omega$ and $\delta\in (C^{(2)})^{M_1}$, so $t_{i_1}(\delta)=i_1(\delta)\in (C^{(2)})^{N_1}$.
    By \autoref{Lemma: 2},
    $$t_{i_1}(\lambda)=\sup_{\delta<\lambda}t_{i_1}(\delta)=\sup _{\delta<\lambda}i_1(\delta)\in (\lim(C^{(2)}))^{N_1},$$
    hence $t_{i_1}(\lambda)\in (C^{(2)})^{N_1}$.
    If $\mathrm{cf}(\lambda)>\kappa$, then by $\mathrm{GCH}$, $i_1(\lambda)=\lambda$, so $t_{i_1}(\lambda)=\lambda\in (C^{(2)})^{N_1}$.
    Thus $k$ is a tightly $\lambda$-$C^{(2)}$-compact embedding.

    Because $N_1\models$“there is no $N$-strongly compact cardinal in $(i_1(\kappa),i_1(\mu))$”, we can write $k(\mathbb P)=\mathbb P\times \mathbb R\times \mathbb S$ where $\mathbb R=k(\mathbb P)\upharpoonright[\kappa,i_1(\kappa)]$ and $\mathbb S=k(\mathbb P)\upharpoonright[i_1(\mu),k(\kappa))$.
    Since $N_1\models t_k(\lambda)^+\le i_1(\lambda)^+<i_1(\mu)$, by \autoref{Lemma: club shooting} we have $N_1\models$“$\mathbb S$ has a dense subset that is $t_k(\lambda)^+$-directed closed”.
    By Claim 2.1.1, $\mathbb S$ preserves $\mathrm{GCH}$, so for any $(N,\mathbb S)$-generic filter $I$, $N_1[I]\models t_k(\lambda)\in C^{(2)}$.
    Let $I$ be such a generic filter.
    Since $\kappa$ is not measurable in $N$, $\mathbb R$ is undefined at $\kappa$, and by \autoref{Lemma: club shooting}, $\mathbb R$ has a dense subset $\mathbb D$ that is $\kappa^+$-closed.
    As $N_1\models\mathrm{GCH}$, $\mathbb D$ has only $|i_1(2^\kappa)|=|i_1(\kappa^+)|=\kappa^+$ many dense subsets in $N_1$.
    Because $N_1^\kappa\subseteq N_1$, a standard argument yields a filter $J\in V_1$ that is $\mathbb R$-generic over $N_1$.
    In $N_1[I]$ we have $|\mathbb P\times \mathbb R|=i_1(\kappa)<(\kappa^{++})^{V_1}<\lambda\le t_k(\lambda)$, so $\mathbb P\times \mathbb R$ is a small forcing relative to $t_k(\lambda)$; consequently $t_k(\lambda)$ remains $\Sigma_2$-correct in $N_1[G\times I\times J]$.
    Lift $k$ to $k^*:V_2\to N_1[G\times I\times J]$. We verify that $k^*$ is a tightly $\lambda$-$C^{(2)}$-compact embedding; it suffices to show $t_{k^*}(\lambda)=t_k(\lambda)$.
    Suppose otherwise; then there exists $s\in N_1[G\times I\times J]$ with $|s|^{N_1[G\times I\times J]}<t_k(\lambda)$ and $k^{*}{''}\lambda\subseteq s$.
    Since $N_1[G\times J]\models$“$\mathbb S$ has a dense subset that is $(|s|^+)^{N_1[G\times I\times J]}$-distributive”, we have $s\in N_1[G\times J]$.
    Moreover, $N_1\models|\mathbb P\times \mathbb R|<(\kappa^{++})^{V_1}<t_k(\lambda)$, so $N_1[I]\models$“$\mathbb P\times \mathbb R$ is $(\kappa^{++})^{V_1}.c.c$”. Hence there exists $t\in N_1$ with $|t|^{N_1}<t_k(\lambda)$ and $\mathbb{P\times R}\Vdash^{N_1}t\supseteq \dot s$.
    But then $k^{*}{''}(\lambda)=k''\lambda\subseteq s\subseteq t$, contradicting the definition of $t_k(\lambda)$. Therefore $t_{k^*}(\lambda)=t_k(\lambda)\in (C^{(2)})^{N_1[G\times I\times J]}$.

    Finally, we show that the required $(N,\mathbb S)$-generic filter $I$ exists in $V_2$.
    Write $j_1(\mathbb P)=\mathbb P\times\mathbb C_\kappa\times \mathbb P_{tail}$, where $\mathbb P_{tail}=j_1(\mathbb P)\upharpoonright(\kappa,j_1(\kappa))$.
    Clearly $i_1(\mathbb P_{tail})=\mathbb S$.
    Since $M_1\models$“there is no $M$-strongly compact cardinal in $(\kappa,\mu)$ and $\mathrm{cf}(\mu)<\mu$”, $\mathbb P_{tail}$ is trivial on $(\kappa,\mu^+]$, and by \autoref{Lemma: club shooting}, $\mathbb P_{tail}$ has a dense subset that is $\mu^+$-directed closed.
    In $M_1$, $\mathbb P_{tail}$ has only $j_1(\kappa)=|\kappa^{\mu^{<\kappa}}|=\mu^+$ many dense subsets, so there exists $F\in V_1$ that is $(M_1,\mathbb P_{tail})$-generic.
    Then $i_1''F$ generates an $(N_1,\mathbb S)$-generic filter; set $I=i_1''F$. This completes the proof.
\end{proof}

\begin{Remark}
    If $V$ satisfies that there is no strong cardinal above $\kappa$, and $\mathcal K$ denotes the $V$-strong cardinals below $\kappa$, then the conclusion of \autoref{Theorem: tightly C(2) may not} can be strengthened to “$V_2\models$‘the least tightly $C^{(2)}$-compact cardinal has no strong cardinal below it’”.
\end{Remark}

\begin{Theorem}
    In the model $V_2$ obtained in \autoref{Theorem: tightly C(2) may not}, the Mitchell order of $\kappa$ equals $1$. Hence, although a tightly $C^{(2)}$-compact cardinal must be a limit of measurable cardinals, its Mitchell order need not be greater than $1$.
\end{Theorem}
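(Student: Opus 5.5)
We need to show two things in $V_2$: $o(\kappa)\ge 1$, i.e. $\kappa$ is measurable, and $o(\kappa)<2$, i.e. no normal measure $W$ on $\kappa$ has $\kappa$ measurable in $\mathrm{Ult}(V_2,W)$. The second sentence of the theorem follows once we recall that a tightly $C^{(2)}$-compact cardinal is a limit of measurables. Indeed, by the proof of \autoref{Theorem: tightly C(2) may not}, the embedding $k^*$ witnesses this, since $\kappa\in C^{(2)}$ in the target model.

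\textbf{Lower bound.} Measurability of $\kappa$ in $V_2$ is immediate. $\kappa$ is tightly $C^{(2)}$-compact in $V_2$, so for any $\lambda\in C^{(2)}-\kappa$ the lifted embedding $k^*$ has critical point $\kappa$. Then $\{X\subseteq\kappa:\kappa\in k^*(X)\}$ is a $\kappa$-complete nonprincipal ultrafilter, and $\kappa$ is measurable.

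\textbf{Upper bound.} Suppose toward a contradiction that $W$ is a normal measure on $\kappa$ in $V_2$ with $\kappa$ measurable in $N^*=\mathrm{Ult}(V_2,W)$. We use the gap forcing theorem \cite{hamkins2001gap}, since $\mathbb Q*\mathbb P$ admits a gap at $\omega$. Then $j_W\upharpoonright V:V\to N'$ is definable in $V$ and $N'\subseteq N^*$ is the corresponding ground. Moreover, $\kappa$ measurable in $N^*$ implies $\kappa$ measurable in $N'$, again by the gap forcing theorem applied inside $N^*$.

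Next we aim to show that $\kappa$ is strongly compact in $N'$. Here we use KMT in $V$: since $\kappa$ is extendible, hence supercompact in $V$, and $j_W\upharpoonright V$ is a normal-measure ultrapower of $V$, $N'$ agrees with $V$ up to $V_{\kappa+1}$. Then $V_\kappa$-strong compactness reflects as follows. In $V$ the set of $V$-strongly compact cardinals below $\kappa$ is stationary, even of measure one for every normal measure, because $\kappa$ is supercompact and hence strongly compact in $j(V)$. By KMT, $\kappa$ itself in $N'$ is either supercompact or a measurable limit of supercompacts, hence strongly compact or a measurable limit of strong compacts.

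We then exploit the forcing structure. $j_W(\mathbb P)$ factors as $\mathbb P\times\mathbb C_\kappa^{N^*}\times\text{tail}$ whenever $\kappa\in j_W(\mathcal K)$, that is, whenever $\kappa$ is Mahlo in $N_1'$ (automatic) and strongly compact in $N'$. In that case the lift of $j_W$ adds a club through $\kappa$ avoiding the inaccessibles. But $N^*$ contains $j_W(G)\upharpoonright\kappa+1$, so $\kappa$ would not be Mahlo in $N^*$, contradicting measurability in $N^*$.

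\textbf{Main obstacle.} The remaining case is $\kappa$ measurable in $N'$ but not strongly compact there. This is exactly where Mitchell order $\ge2$ would have to come from in $V$. I would try to rule it out by arguing that $\kappa\in j_W(\mathcal K)$ holds for every normal $W$ in $V_2$, using the fact that the set $\mathcal K'=\{\xi<\kappa:\xi$ measurable in $V$ and not strongly compact in $V\}$ is killed (made non-measurable) in $V_2$ while remaining measurable in $V$. Alternatively, I would argue directly that measurables of $V_2$ below $\kappa$ must be $V$-measurable limits of elements of $\mathcal K$ that are not themselves in $\mathcal K$. This subtle step is where I expect the real difficulty, and I might instead need to identify exactly which $\xi<\kappa$ are measurable in $V_2$ (via gap forcing and the club shooting) and show that $W$-almost all $\xi<\kappa$ fail to be measurable, by reflecting the property ``$\kappa$ is $V$-strongly compact'' through $W$.
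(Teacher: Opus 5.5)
You have the same architecture as the paper: pull $j_W$ back to $V$ by gap forcing, get $\kappa$ measurable in $N'$, show $\kappa\in j_W(\mathcal K)$, and conclude that the $\kappa$-th coordinate of $j_W(G)$ is a club avoiding the inaccessibles, so $\kappa$ is not Mahlo in $\mathrm{Ult}(V_2,W)$. There is, however, a genuine gap, and it is the step you flag yourself as the ``main obstacle''. You never actually establish that $\kappa$ is strongly compact in $N'$, so you do not get $\kappa\in j_W(\mathcal K)$, and the contradiction is not reached.

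Your two attempts at that step fail:
\begin{itemize}
\item The claim that the $V$-strongly compact cardinals below $\kappa$ have measure one for every normal measure is false. A normal measure of Mitchell order $0$ concentrates on non-measurable cardinals. For the same reason, $\kappa$ need not be strongly compact in an arbitrary normal ultrapower of $V$.
\item Invoking KMT in $N'$ to conclude that $\kappa$ is ``supercompact or a measurable limit of supercompacts'' presupposes that $\kappa$ is already strongly compact there, which is circular.
\end{itemize}

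The missing idea is Menas's theorem: a measurable limit of strongly compact cardinals is strongly compact. Since $\kappa$ is extendible in $V$, it is a limit of supercompact, hence strongly compact, cardinals in $V$. Let $A$ be the set of $V$-strongly compact cardinals below $\kappa$. Because $\mathrm{crit}(j_W\upharpoonright V)=\kappa$, elementarity gives $j_W(A)\cap\kappa=A$, so $\kappa$ is a limit of strongly compact cardinals in $N'$. Together with the measurability of $\kappa$ in $N'$, which you already have, Menas gives that $\kappa$ is strongly compact in $N'$ outright.

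So your ``remaining case'' ($\kappa$ measurable but not strongly compact in $N'$) cannot occur. No analysis of which $\xi<\kappa$ remain measurable in $V_2$ is needed. This is exactly how the paper closes the argument: $\kappa$ is a measurable limit of strongly compacts in $N$, hence strongly compact there, hence $\kappa\in i(\mathcal K)$, hence $\kappa$ is not Mahlo in $M_U$.
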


\begin{proof}[Proof]
    Otherwise, suppose there exists a normal measure $U\in V_2$ on $\kappa$ with $o^{V_2}(U)=1$.
    Let $j_U:V_2\to M_U$ be the ultrapower induced by $U$.
    Since $V_2$ is a gap forcing extension of $V$, the restriction $i=j_U\upharpoonright V:V\to N$ is definable in $V$ and $j_U$ is a lift of $i$, so $N^{i(\mathbb Q*\mathbb P)}=M_U$.
    Thus $M_U$ is also a gap forcing extension of $N$, and $\kappa$ is measurable in $M_U$. Consequently $\kappa$ is measurable in $N$, and because $\mathrm{crit}(i)=\kappa$, $\kappa$ is a measurable limit of strongly compact cardinals in $N$; hence $\kappa$ is strongly compact in $N$.
    As in \autoref{Theorem: tightly C(2) may not}, we can show that $\kappa$ remains measurable in $N^{i(\mathbb Q)}$, so $\kappa\in i(\mathcal K)$. Then in $N^{i(\mathbb Q*\mathbb P)}=M_U$, $\kappa$ is not Mahlo, a contradiction.
    Hence the theorem holds.
\end{proof}

The above theorem naturally raises the following questions:
\begin{Question}
    Can one find a model in which the least supercompact cardinal is the least strongly compact and also the least tightly $C^{(2)}$-compact?
\end{Question}

The following is another question by Usuba:
\begin{Question}[Usuba\cite{Usubaprivate}]
    Is it possible to find a strongly compact cardinal which is not tightly $C^{(1)}$-compact?
\end{Question}

\section{How about $n\ge 3$ ?}

Notice that the proof of \autoref{Theorem: tightly C(2) may not} depends on 
``all the $\Sigma_2$-ccorrect cardinals are indestructible".
However, there's no indestructible $\Sigma_3$-correct cardinal by the following fact:
\begin{Theorem}[Bagaria et al., \cite{BagariaHamkinsLaverindestructible}]
Suppose $V_\kappa\prec_{\Sigma_2} V_\lambda$ for some $\lambda\ge \eta$ and $G$ is $(V,\mathbb Q)$-generic for some non-trivial $\kappa$-strategically closed forcing $\mathbb Q\in V_\eta$.
Then for all $\theta\ge \eta$,
$V_\kappa=V[G]_\kappa\not\prec_{\Sigma_3}V[G]_\theta$.
    
\end{Theorem}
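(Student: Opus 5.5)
The plan is to exhibit, in $V[G]$, a single $\Sigma_3$ assertion that holds in $V[G]_\theta$ but fails in $V_\kappa=V[G]_\kappa$. The assertion should express that the universe is a nontrivial extension of a sufficiently correct ground by a sufficiently closed poset. I expect the first half (truth in $V[G]_\theta$) to be routine; the second half (failure in $V_\kappa$) is where the real work lies, and I am least sure of it.

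\emph{Preliminaries.} Since $\mathbb Q$ is $\kappa$-strategically closed, it adds no new sets of rank $<\kappa$, so $V[G]_\kappa=V_\kappa$. Since $\mathbb Q$ is nontrivial, some least ordinal $\gamma$ acquires a new subset, and $\kappa\le\gamma<\eta$ because $\mathbb Q\in V_\eta$. I will use the uniform definability of grounds (Laver, Woodin, Fuchs--Hamkins--Reitz): for $\theta\ge\eta$, and after enlarging $\theta$ to a suitable closure point if necessary (so assume $\theta$ is, say, a strong limit above $|\mathbb Q|$), the set $V_\theta$ is definable in $V[G]_\theta$ from a parameter $r\in V_\eta$.

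\emph{Key steps.}
\begin{enumerate}
\item Define $\psi$: ``there exist $\alpha<\beta$, a parameter $r$, and a poset $P$ such that $W:=W_r$ is a ground, $P\in W_\beta$ is nontrivial and $\alpha$-strategically closed in $W$, the universe is $W[g]$ for some $W$-generic $g\subseteq P$, and $W_\alpha\prec_{\Sigma_1}W_\beta$.'' Note that the last clause is $\Sigma_2$ in the sense of the universe $W$.
\item Check that $\psi$ is $\Sigma_3$ over structures of the form $V[G]_\theta$. The outer quantifiers are existential, ``$W_r$ is a ground containing $P$ and $g$'' is $\Pi_2$ (it is a universal statement about set-sized pieces of $W_r$), and the inner $\Sigma_1$-elementarity clause is bounded once $W_\beta$ is available.
\item Verify $V[G]_\theta\models\psi$, witnessed by $\alpha=\kappa$, $\beta=\eta$ (or $\beta=\lambda$ if $\eta<\lambda$), $P=\mathbb Q$ and $W=V$. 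The hypothesis $V_\kappa\prec_{\Sigma_2}V_\lambda$ gives the elementarity clause.
\item Assume toward a contradiction that $V_\kappa\prec_{\Sigma_3}V[G]_\theta$. Then $\psi$ reflects, so $V_\kappa=W'[g']$ for some ground $W'$ of $V_\kappa$ and a nontrivial poset $P'\in W'_{\beta'}$ with $\alpha'<\beta'<\kappa$.
\item Transfer this decomposition upward. Use $V_\kappa\prec_{\Sigma_2}V_\lambda$ together with the ground-model definability uniformity to extend $W'$ to a ground $\overline{W}$ of $V$ with $V=\overline{W}[g']$. This is possible because forcing over rank-initial segments by small posets is $\Sigma_2$-expressible.
\item Iterate the reflection. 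Applying the same argument to $\alpha'$ in place of $\kappa$ produces a strictly decreasing sequence of ordinals, or else contradicts the leastness/closure bookkeeping. Concretely, I would also require in $\psi$ that $\alpha$ be the least ordinal such that some new subset of $\alpha$ is added; then the reflected $\alpha'<\kappa$ yields a new subset of $\alpha'$ over a ground of $V$. That contradicts the facts that $V[G]$ adds no subsets below $\kappa$ and that the correctness clause forces $\overline{W}$ to agree with $V$ below $\kappa$.
\end{enumerate}

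\emph{Main obstacle.} The hard part is Steps 5--6. The danger is that $V$ itself may legitimately be a small forcing extension of some ground, in which case the reflected witness is consistent rather than contradictory. To rule this out, I would strengthen $\psi$ by adding the clause ``$\alpha$ is $\Sigma_2$-correct in $W$ and $P$ is $\alpha$-strategically closed.'' After reflection, this clause makes $P'$ add no subsets of $\alpha'$ while still being nontrivial below $\kappa$. Combined with Step 5, this should yield two grounds of $V$ that disagree on $V_\kappa$, which is impossible since both grounds have $V_\kappa$ as their rank-$\kappa$ initial segment. I have not verified that this closes the gap; it is the first repair I would try.
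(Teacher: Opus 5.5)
\emph{Note: the paper contains no proof of this statement; it only cites it from Bagaria--Hamkins--Tsaprounis--Usuba. Your proposal therefore has to stand on its own, and it does not.}

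There is a genuine gap, and it sits exactly where you suspected: Steps 5--6 never produce a contradiction, and the approach as set up cannot produce one. Reflecting $\psi$ to $V_\kappa$ and lifting gives only the following: a ground $\overline W$ of $V$ and ordinals $\alpha'<\beta'<\kappa$ with $V=\overline W[g']$, where $g'$ is generic for a nontrivial $\alpha'$-strategically closed $P'\in\overline W_{\beta'}$, $\overline W_{\alpha'}=V_{\alpha'}$, and (with your strengthened clause) $\alpha'$ is $\Sigma_2$-correct in $\overline W$.

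Nothing in the hypotheses rules this configuration out. For instance, $V$ may itself be $\overline W[g']$ with $g'$ a Cohen subset of some $\alpha'<\kappa$ that is indestructibly supercompact in $\overline W$. Small forcing preserves the $\Sigma_2$-correctness of $\kappa$, so the theorem's hypotheses can still hold. In such a $V$, $V_\kappa$ satisfies $\psi$ and all your strengthenings outright: nontrivial, $\alpha'$-strategically closed, $\alpha'$ $\Sigma_2$-correct in the ground, and $\alpha'$ least with a new subset. So no argument whose only input is ``$V_\kappa\models\psi$'' can reach a contradiction. Separately, your clause $W_\alpha\prec_{\Sigma_1}W_\beta$ is nearly vacuous, since it holds for every beth fixed point $\alpha$ of $W$.

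The proposed repairs rest on false claims.
\begin{itemize}
\item $\overline W$ does not have $V_\kappa$ as its rank-$\kappa$ initial segment. Since $P'\in V_\kappa$ is nontrivial, $g'\in V_\kappa\setminus\overline W$. The correctness clause only gives agreement below $\alpha'$, not below $\kappa$, so ``two grounds disagreeing on $V_\kappa$'' is not a contradiction.
\item The new subset of $\alpha'$ is new over $\overline W$, not over $V$. It therefore does not clash with $\mathbb Q$ adding no sets of rank $<\kappa$ over $V$.
\item ``Iterating at $\alpha'$'' needs $\Sigma_3$-elementarity of $V_{\alpha'}$ into some level above $P'$, and you never obtain it. You cannot simply add ``$U_\alpha\prec_{\Sigma_3}U$'' to $\psi$: that clause is $\Pi_3$, making the sentence $\Sigma_4$, so it no longer reflects. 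A bounded version $U_\alpha\prec_{\Sigma_3}U_\beta$ would need a set-sized $\beta<\theta$ above $\mathbb Q$ with $V[G]_\kappa\prec_{\Sigma_3}V[G]_\beta$, and the hypothesis supplies only the single top level $\theta$.
\end{itemize}

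What is missing is a sentence true in $V[G]_\theta$ whose reflected witness below $\kappa$ is genuinely impossible, together with the argument for that impossibility. Such a sentence must exploit something tied to $\kappa$'s position, not merely the existence of a closed ground decomposition. As written you have only the routine half (Steps 1--4).

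There is also a smaller issue in Step 3. If $\lambda\ge\theta$, the witness $\beta=\lambda$ is not available inside $V[G]_\theta$. Moreover, the strengthened clause ``$\Sigma_2$-correct in $W$'' requires $V_\kappa\prec_{\Sigma_2}V_\beta$ for some $\beta<\theta$ above $\mathbb Q$, which also has to be argued.
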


Hence, the adoption of new, distinct methods, unlike those previously considered, is essential to handle the case $n\ge 3$.

\section*{Acknowledgment}
The author thanks Joan Bagaria, Gabriel Goldberg and Toshimichi Usuba for their helpful comments and suggestions on this paper.
The author is grateful to Toshimichi Usuba for carefully reading a previous version of this paper and for sharing his private manuscript with the author.
 The author also thanks Daisuke Ikegami for his helpful conversations during the preparation of this work.

\section*{AI declaration}
 The author used AI for translation purposes. No AI tools were used to generate mathematical content, proofs, or results. The author takes full responsibility for the content of this paper.
\normalem
\printbibliography{}

\end{document}